\title[Sharp gradient estimate for heat kernel...]{Sharp gradient estimate for heat kernels on $RCD^*(K,N)$ metric measure spaces}
\author{Jia-Cheng Huang}
\address{Jia-Cheng Huang: School of Mathematical Sciences, Fudan University,  Shanghai \\} \email{hjiach@mail2.sysu.edu.cn}
\author{Hui-Chun Zhang}
\address{Hui-Chun Zhang: Department of Mathematics, Sun Yat-sen University,  Guangzhou \\}
\email{zhanghc3@mail.sysu.edu.cn}
\newtheorem{thm}{Theorem}[section]
\newtheorem{thm1}{Theorem}
\newtheorem{lem}[thm]{Lemma}
\newtheorem{coro}[thm]{Corollary}
\theoremstyle{definition}
\theoremstyle{remark}
\newtheorem{defn}[thm]{Definition}
\newtheorem{rem}[thm]{Remark}
\numberwithin{equation}{section}
\newcommand{\ls}{\leqslant}
\newcommand{\gs}{\geqslant}
\newcommand{\ip}[2]{\langle{#1},{#2}\rangle}
\newcommand{\R}{\mathbb{R}}
\newcommand{\La}{\mathscr{L}}
\begin{document}

\begin{abstract}
In this paper, we will establish  an elliptic local Li-Yau   gradient estimate for weak solutions of the heat equation  on metric measure spaces with generalized Ricci curvature bounded from below. One of its main applications is a sharp gradient estimate for the logarithm of heat kernels.   These results seems new even for smooth Riemannian manifolds.
\end{abstract}

\maketitle

\section{Introduction}
The Li-Yau  gradient estimate for evolution equations is certainly central in   geometric analysis on Riemannian manifolds. One of the fundamental results is the following gradient estimates for  heat equations.
\begin{thm1}[Li-Yau \cite{ly86}]\label{thm-ly}
Let $M^n$ be an $n$-dimensional complete non-compact Riemannian manifold with $Ric(M^n)\gs-k$,  $k\gs0$. Let $B_{R}$ be a geodesic ball with radius $R$. If $u$ is a smooth positive solution of the heat equation $\Delta u=\partial_tu$ on $B_R\times(0,T)$, $0<T\ls \infty$, then
  \begin{equation}\label{eq1.1}
 \sup_{x\in B_{R/2}}\Big( |\nabla f|^2  -\alpha\cdot  \partial_t f \Big)(x,t) \ls \frac{C_n\cdot\alpha^2}{R^2}\Big(\frac{\alpha^2}{\alpha^2-1}+\sqrt kR\Big)+\frac{n\alpha^2k}{2(\alpha-1)}+\frac{n\alpha^2}{2t}
 \end{equation}
where $\alpha>1$, $f:=\ln u$, and $C_n$ is a constant depending only on $n$.
\end{thm1}
In another direction, Hamilton established an elliptic gradient estimate for bounded solutions of the heat equation on compact manifolds.
\begin{thm1}[Harmilton \cite{ham93}]\label{thm-hami}
Let  $M^n$ be an $n$-dimensional compact Riemannian manifold without boundary and with $Ric(M^n)\gs-k$,  $k\gs0$. Let $u$ be a smooth positive solution of the heat equation on $M^n\times(0,\infty)$. Suppose that $u\ls M$ on $M^n\times(0,\infty)$. Then, by setting $f:=\ln u$, we have
\begin{equation}\label{eq1.2}
|\nabla f|^2\ls \Big(\frac1 t +2k \Big)\ln \frac M u.
\end{equation}
\end{thm1}

In \cite{sz06}, Souplet and Zhang proved a localized elliptic gradient for heat equation on noncompact manifolds.
\begin{thm1}[Souplet-Zhang \cite{sz06}]\label{thm-sz}
 Let $(M^n,g)$ be an $n$-dimensional complete Riemannian manifold with $Ric(M^n)\gs-k$ with $k\gs0$.
Assume that $u$ is a smooth positive solution of the heat equation  on $B_{R}\times(0,T)$.
Suppose also that $u\ls M$ on $B_{R}\times(0,T)$. Then we have
\begin{equation}\label{eq1.3}
 |\nabla f|^2 \ls C_n\cdot\Big(\frac{1}{R^2}+\frac{1}{T}+k\Big) \Big( 1+\ln\frac{M}{u} \Big)^2, \qquad (x,t)\in  B_{R/2}\times(T/2,T),
\end{equation}
where $f:=\ln u$ and the constant $C_n$ depends only on the dimension $n$.
\end{thm1}

There is a rich literature on extensions and improvements of these Li-Yau's gradient estimates. Here, we refer some recent nice works and surveys on this topic, \cite{lee12,qia14,bbg16,lx11,bhllmy15,lixd12,nil08,lixd05} and so on.

In the pioneering works of Strum \cite{stu06-1,stu06-2} and Lott-Villani \cite{lv09,lv07-jfa}, an synthetic notion of lower Ricci bounds on metric spaces has been introduced. Up to now, many  improvements were given along this direction (see, for example, \cite{ags14,ags15,ams16,ags-duke,bs10,eks15,gig13} and so on). In particular, a satisfactory notion,
so-called \emph{Riemannian curvature-dimension condition} (denote by $RCD^*(K,N)$), was given in \cite{eks15,ams16}.  The constants $K$ and $N$   play the role of  ``Ricci curvature $\gs K$ and dimension $\ls N$".  Let $(X,d,\mu)$ be a metric measure space (a metric space equipped  a Radon measure)  satisfying $RCD^*(K,N)$ is a generalized notion for ``an Riemannian manifold with $Ricci\gs K$ and $\dim\ls N$".

Let $(X,d,\mu)$ be a metric measure space satifying $RCD^*(K,N)$, for some $K\in \mathbb R$ and $N\in[1,\infty)$.  Given any domain $\Omega\subset X$, according to \cite{che99,shan00}, the Sobolev spaces $W^{1,p}(\Omega)$, $1\ls p\ls\infty$, are well defined. Moreover,   the  space $W^{1,2}(\Omega)$ is a Hilbert space \cite{ags14}. Then, the weak solutions of  the heat equation on $\Omega$ are well defined. That is,
given an interval $I\subset\mathbb R$, a function $u(x,t)\in W_{\rm loc}^{1,2}(\Omega\times I)$ is called  a {\it (locally) weak solution
 for the heat equation on $\Omega\times I$} if it satisfies
\begin{equation*}
-\int_I\int_{\Omega}\!\ip{\nabla u}{\nabla \phi}d\mu dt = \int_I\int_{\Omega}\!\frac{\partial u}{\partial t}\cdot\phi d\mu dt
\end{equation*}
for all Lipschitz functions $\phi$ with compact support in $\Omega\times I$, where $\int_\Omega\ip{\nabla u}{\nabla \phi}d\mu$ is the inner product of $W^{1,2}(\Omega)$.  The local boundedness and the Harnack inequality for any such locally weak solutions of the heat equation have been established in \cite{stu95,stu96,mm13}.

In the case when $\Omega=X$ and $I=[0,\infty)$, the heat flow $(H_tf)_{t\gs0}$ with initial data $f\in L^2(X)$ provides a globally weak solution of the heat equation on $X$. By an abstract $\Gamma_2$-calculus for $(H_tf)$, some global versions of Li-Yau type gradient estimates  for $(H_tf)$ have been obtained (see \cite{qzz13,gm14,jia15,jiang-z16}). However, the locally  weak solutions $u(x,t)$  do not form a semi-group in general, and hence the method of $\Gamma_2$-calculus does not work in general.

  Our main result in this paper is the following  local gradient estimate  on $RCD^*(K,N)$ metric measure spaces . This is new even for smooth Riemannian manifolds!

\begin{thm}\label{thm1.1}
  Given $K\gs 0$ and $N\in(1,\infty)$, let $(X,d,\mu)$ be a metric measure space satisfying  $RCD^*(-K,N)$.
Let $T\in(0,\infty)$ and let  $B_{R}\subset X$ be a geodesic ball of radius $R$. Assume that  $u(x,t)\in W_{\rm loc}^{1,2}\big(B_{R,T}\big)$ is a  locally weak solution of the heat equation on $B_{R,T}:=B_{R}\times (0,T)$. Suppose also that there exist $M, m>0$ such that $m\ls u\ls M$ on $B_{R,T}$.   Then we have the  local gradient estimate:
\begin{equation}\label{eq1.4}
  |\nabla f|^2  (x,t)\ls C_N\cdot\Big(\frac{\ln(M/m)}{R^2}+\frac{1}{T}+K\Big)\cdot\ln\frac{ M}{ u(x,t)}
\end{equation}
for almost every $(x,t)\in  B_{R/2}\times(T/2,T)$, where $f=\ln u $ and the constant $C_N$  depends only on $N$.
\end{thm}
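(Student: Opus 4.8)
\medskip
\noindent\emph{Strategy of the proof.} Set $f=\ln u$ and introduce the auxiliary function
\[
F:=\frac{|\nabla f|^2}{w},\qquad w:=\ln\frac{M}{u}=\ln M-f\ \ (\gs 0),
\]
so that, since $|\nabla f|=|\nabla w|$, estimate \eqref{eq1.4} is exactly the assertion that $F\ls C_N\big(\ln(M/m)/R^2+1/T+K\big)$ a.e.\ on $B_{R/2}\times(T/2,T)$. To avoid dividing by a quantity that may vanish on $\{u=M\}$, I would first replace $M$ by $(1+\varepsilon)M$ for $\varepsilon>0$: this is legitimate since $u\ls(1+\varepsilon)M$, it forces $0<\ln(1+\varepsilon)\ls w\ls\ln((1+\varepsilon)M/m)$ on the whole cylinder, and letting $\varepsilon\downarrow0$ at the end recovers \eqref{eq1.4} by dominated convergence in the (pointwise) right-hand side. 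From $\partial_t u=\Delta u$ one gets that $w$ is a weak solution of $\Delta w=\partial_t w+|\nabla w|^2$ on $B_{R,T}$; in particular $\mathcal L w=|\nabla w|^2$ where $\mathcal L:=\Delta-\partial_t$.

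Before doing any calculus I would record the tools available on $RCD^*(-K,N)$. Weak solutions of the heat equation are locally bounded and locally H\"older continuous by the parabolic De Giorgi--Nash--Moser theory \cite{stu95,stu96,mm13}; the space is locally doubling and supports a weak $(1,2)$-Poincar\'e inequality, $W^{1,2}$ is Hilbert \cite{ags14}, and the Bakry--\'Emery condition $BE(-K,N)$ together with the Leibniz/chain rules for minimal weak upper gradients holds \cite{eks15,ams16,gig13}; moreover good cut-off functions $\phi$ on annuli with $|\nabla\phi|^2/\phi+|\Delta\phi|\ls C/R^2$ exist. For the present argument only the weak Bochner inequality $\tfrac12\Delta|\nabla w|^2\gs\ip{\nabla w}{\nabla\Delta w}-K|\nabla w|^2$ (a consequence of $BE(-K,N)$, after its self-improvement) is needed; the dependence of $C_N$ on $N$ enters solely through the structural constants (doubling, Poincar\'e, cut-offs).

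The heart of the argument is the differential inequality for $F$. Writing $h:=|\nabla w|^2$ and using $\mathcal L w=h$ and, by Bochner, $\mathcal L h\gs 2\ip{\nabla w}{\nabla h}-2Kh$, a direct expansion of $\mathcal L(h/w)$ with the substitution $h=Fw$ yields, after the lower-order terms in $w^{-1}$ combine,
\[
\mathcal L F\ \gs\ \frac{2(w-1)}{w}\,\ip{\nabla w}{\nabla F}\ +\ F^2\ -\ 2KF ,
\]
understood in the weak sense. I would then choose a space-time cut-off $\eta=\phi(x)\psi(t)$ with $\eta\equiv1$ on $B_{R/2}\times(T/2,T)$, $\mathrm{supp}\,\eta\subset B_R\times(T/4,T]$, $|\nabla\eta|^2/\eta\ls C/R^2$, $|\Delta\eta|\ls C/R^2$, $|\partial_t\eta|\ls C/T$, and apply a weak parabolic maximum principle to $G:=\eta F$. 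At a point $(x_0,t_0)$ realizing $\max G$ (if this maximum is positive) one has $\eta\nabla F=-F\nabla\eta$ and $\mathcal L G\ls 0$, and feeding the displayed inequality in produces a quadratic inequality for $G(x_0,t_0)$. Estimating the transport term via $|\nabla w|=\sqrt{Fw}$ and Young's inequality yields, besides terms of size $1/R^2+1/T+K$, a contribution bounded by $C\,\eta\,w\,|\nabla\eta|^2\ls C\ln((1+\varepsilon)M/m)\,|\nabla\eta|^2\ls C\ln((1+\varepsilon)M/m)/R^2$ --- this is the origin of the first term in \eqref{eq1.4}.

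\textbf{The main obstacle} is the part of the transport term carrying a factor $w^{-1}$ --- equivalently, the genuine degeneracy of $F$ near $\{u=M\}$, which, as one checks, no choice of denominator removes. To control it I would first establish, by exactly the same scheme applied to the non-degenerate function $|\nabla f|^2/(1+w)^2$, a Souplet--Zhang-type estimate \cite{sz06} of the form $|\nabla f|^2\ls C_N(1/R^2+1/T+K)(1+w)^2$ on a slightly larger cylinder, and then use it as an a priori input: it bounds $|\nabla w|$ pointwise, so that the $w^{-1}$-contributions are either absorbed into $G^2$ or, on the part of the cylinder where $u$ is close to $M$, handled directly --- there $v:=M-u\gs0$ is itself a weak solution with $v\asymp Mw$, and a gradient bound for $v$ forces $|\nabla f|^2\lesssim w^2\log(1/w)\lesssim w$. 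A localization/iteration over shrinking cut-offs then upgrades the squared estimate to the sharp linear one on $B_{R/2}\times(T/2,T)$, and $\varepsilon\downarrow0$ finishes the proof. The point requiring the most care throughout is the rigor of the non-smooth calculus: the Bochner inequality, the Leibniz rule for $\mathcal L F$, and ``evaluation at the maximum'' must all be justified for merely weak solutions on $(X,d,\mu)$ via the self-improvement of $BE(-K,N)$ and a weak parabolic maximum principle, rather than by pointwise second-order conditions.
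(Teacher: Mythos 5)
Your overall scaffolding matches the paper's: you take $F=\eta(f)|\nabla f|^2$ with $\eta(s)=-1/s$ (that is, $F=|\nabla f|^2/w$ after normalizing $u$ so that $f<0$), you derive a differential inequality for $F$, localize with a space--time cut-off, and run a parabolic maximum principle. The normalization trick (replacing $M$ by $M'>M$ and passing to the limit) is also what the paper does. But there is a concrete and central gap.

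You assert that ``only the weak Bochner inequality $\tfrac12\Delta|\nabla w|^2\gs\ip{\nabla w}{\nabla\Delta w}-K|\nabla w|^2$ is needed.'' This is precisely where the argument breaks. With that inequality alone one obtains, as you correctly compute,
\[
\La^{\rm ac}F-\partial_t F\ \gs\ \frac{2(w-1)}{w}\,\ip{\nabla w}{\nabla F}\ +\ F^2\ -\ 2KF ,
\]
whose transport coefficient carries the factor $w^{-1}$. At a maximum of $G=\psi F$, substituting $\nabla F=-F\nabla\psi/\psi$ and $|\nabla w|=\sqrt{Fw}$ and applying Young's inequality turns the $w^{-1}\ip{\nabla w}{\nabla F}$ piece into a term of size $|\nabla\psi|^2/(w\psi^2)\sim (wR^2)^{-1}$. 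This blows up as $w\to0$, and the $\varepsilon$-perturbation of $M$ gives only $w\gs\ln(1+\varepsilon)$, so the resulting bound degenerates as $\varepsilon\downarrow0$; dominated convergence cannot recover \eqref{eq1.4}. You identify this degeneracy as ``the main obstacle,'' and the two-step workaround you sketch (a Souplet--Zhang bound for $|\nabla f|^2/(1+w)^2$, then a gradient bound for $v=M-u$ near $\{u=M\}$, then a localization/iteration) is not carried out; as written it does not close, because the constants in the $v=M-u$ estimate depend on $\sup v/v$ and are not obviously commensurate with $\ln(M/m)/R^2+1/T+K$ in the limit.

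The paper's resolution of exactly this obstacle is the \emph{dimensional} self-improvement of Bochner's inequality on $RCD^*(-K,N)$ spaces: the extra term $\frac{N}{N-1}\big(\frac{\ip{\nabla f}{\nabla|\nabla f|^2}}{2|\nabla f|^2}-\frac gN\big)^2$ in \eqref{eq2.1}. In Lemma~\ref{lem3.1} this term is used, together with the completed square $g^2-Bg+\tfrac N4 B^2\gs\tfrac{N-1}{4}B^2$ and a single Young step, to absorb the mixed term $2\eta'AB-2\eta AB$ and produce the \emph{clean} inequality
\[
\La^{\rm ac}F-\partial_tF+2\ip{\nabla f}{\nabla F}\ \gs\ \frac{\eta(\eta''+\eta')-2(\eta')^2}{\eta^3}\,F^2-2KF ,
\]
with transport coefficient exactly $-2$, no $w^{-1}$ anywhere; with $\eta(s)=-1/s$ the $F^2$-coefficient is $1$. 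The paper flags this in Remark~\ref{rem2.7} as the replacement for the trace inequality \eqref{eq1.5} that is unavailable on $RCD^*$ spaces. It is not an optional refinement but the crux of the argument, and your proposal drops it.

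Two secondary gaps. First, ``evaluating at the maximum'' requires the parabolic pointwise maximum principle in the approximate sense (Theorem~\ref{max-p}), which in turn requires knowing that $\La F(\cdot,t)$ is a signed Radon measure with nonnegative singular part; your proposal does not establish this. Second, to apply the Bochner inequality at all you need $g=\partial_tf-|\nabla f|^2\in H^1\cap L^\infty$, which a generic weak solution does not have; the paper handles this via the Steklov average (Lemma~\ref{lem3.4}) and a preliminary version (Lemma~\ref{lem3.2}) with the extra regularity hypothesis, a step absent from your outline.
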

The second author and Zhu in \cite{zz-cvpde} have extended (\ref{eq1.1}) to general $RCD^*(K,N)$ metric measure spaces. Comparing with the argument in \cite{zz-cvpde},  there is a new technical difficulty in the proof of Theorem \ref{thm1.1}. Indeed,
recalling the proof of the elliptic Li-Yau gradient estimates (\ref{eq1.2}) and (\ref{eq1.3}) in the smooth case, we need a simple algebra inequality: for any $C^2$-functions $f,\phi$ on $(M^n,g)$, it holds
\begin{equation}\label{eq1.5}
\big|  f_{ij}+ \phi\cdot g_{ij}\big|^2\gs \frac{1}{n}\Big[trace\Big(  f_{ij}+ \phi\cdot g_{ij}\Big)\Big]^2= \frac{1}{n}\Big(\Delta f+n\phi  \Big)^2.
\end{equation}
We have \emph{not} an appropriate analogous of (\ref{eq1.5}) on general $RCD^*(K,N)$ metric measure spaces.  In this paper,  we find that this lack of (\ref{eq1.5}) can be compensated by an improvement of Bochner inequality, see also Remark \ref{rem2.7}.

One of the main application of Theorem \ref{thm1.1} is the following sharp gradient estimate for the logarithm of the heat kernel.
\begin{thm}\label{thm1.2}
  Given   $N\in(1,\infty)$ and $K\gs0$, let $(X,d,\mu)$ be a metric measure space satisfying  $RCD^*(-K,N)$.  Let  $H(x,y,t)$ be the heat kernel on $X$. Then there exists a  constant
  $C_{N,K} $, depending  only on $N$ and $K$,  such that, for almost every $(x,y,t)\in X\times X\times(0,\infty)$, we have
  \begin{equation*}
  |\nabla \ln H(x,y,t)|^2  \ls C_{N,K} \Big(\frac{ 1}{t}+K\Big)\cdot \Big(1+\frac{d^2(x,y)}{t}+ t\Big).
\end{equation*}
Moreover, in the case where $K=0$,   there exists a constant $C_N$, depending only on $N$, such that
 \begin{equation*}
  |\nabla \ln H(x,y,t)|^2 \ls\frac{C_N}{t}\cdot \Big(1+\frac{d^2(x,y)}{t}\Big).
\end{equation*}
\end{thm}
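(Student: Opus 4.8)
The plan is to derive Theorem \ref{thm1.2} from the local estimate of Theorem \ref{thm1.1} by choosing, for a fixed point $(x,y,t)$, a suitable ball $B_R$ and time interval on which the heat kernel $H(\cdot,y,\cdot)$ (as a function of the first variable and time) is sandwiched between two positive constants $m$ and $M$ with a controlled ratio $M/m$. The heat kernel $H(z,y,\cdot)$ is, for fixed $y$, a locally weak (indeed global) solution of the heat equation, so Theorem \ref{thm1.1} applies once we have the two-sided bound. The key input is the sharp Gaussian-type estimates for heat kernels on $RCD^*(-K,N)$ spaces (due to Sturm, Jiang--Li--Zhang, etc.): there exist constants depending only on $N, K$ such that
\begin{equation*}
\frac{C_1^{-1}}{\mu(B_{\sqrt t}(y))}\exp\!\Big(-\frac{d^2(x,y)}{3t}-C_2 t\Big)\ls H(x,y,t)\ls \frac{C_1}{\mu(B_{\sqrt t}(y))}\exp\!\Big(-\frac{d^2(x,y)}{5t}+C_2 t\Big),
\end{equation*}
together with the Bishop--Gromov volume doubling property. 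From these one reads off, on a parabolic cylinder $B_R(x)\times(t-\tau,t)$ with $R$ and $\tau$ comparable to a quantity like $\min\{t, d(x,y)^2/t, \ldots\}$ appropriately, a bound $\ln(M/m)\ls C_{N,K}(1+d^2(x,y)/t + t)$ (the doubling factors contribute a dimensional constant, the Gaussian exponents contribute the $d^2/t$ term, the $C_2 t$ factors contribute the $t$ term, and the volume ratio $\mu(B_{\sqrt t}(y))/\mu(B_R(x))$ is controlled by doubling once $R\gtrsim \sqrt t$ or by a reverse-doubling/segment argument otherwise).

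First I would fix $(x,y,t)$ and set $d:=d(x,y)$. I would distinguish the regime by the size of $d$ relative to $\sqrt t$. Choose $R:=\min\{\sqrt t, d/2\}$ (or a similar explicit choice, possibly also capping by $1$ if one wants the cleanest constants when $t$ is large) and apply Theorem \ref{thm1.1} to $u(\cdot,\cdot)=H(\cdot,y,\cdot)$ on $B_R(x)\times(t/2,t)$, evaluating the conclusion at $(x,t)$, which lies in $B_{R/2}(x)\times(t/2,t)$ after a harmless rescaling of the time interval (take the interval $(0,T)$ with $T=t$ so that $(x,t)$... — more carefully, apply it on $B_R(x)\times(0, 2t)$ so that the target set $B_{R/2}\times(T/2,T)$ contains $(x,t)$). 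On that cylinder $H$ is bounded below by $m:=c_{N,K}/\mu(B_{\sqrt t}(y))\cdot e^{-C d^2/t - C t}$ and above by $M:=C_{N,K}/\mu(B_{\sqrt t}(y))\cdot e^{-c d^2/t + C t}$ after using doubling to compare $\mu(B_{\sqrt t}(z))$ for $z\in B_R(x)$ and $\mu(B_{\sqrt\tau}(z))$ for $\tau\in(t/2,2t)$ with $\mu(B_{\sqrt t}(y))$. The estimate \eqref{eq1.4} then reads
\begin{equation*}
|\nabla \ln H(x,y,t)|^2\ls C_N\Big(\frac{\ln(M/m)}{R^2}+\frac1t+K\Big)\cdot\ln\frac{M}{H(x,y,t)}.
\end{equation*}
Since $1/R^2\ls 4/t + 4/d^2$ and $1/d^2\ls 1/t$ whenever $d\le \sqrt t$ while $R^2=t$ when $d>\sqrt t$, in all cases $1/R^2\ls C/t$ — wait, when $d\le\sqrt t$ we have $R=d/2$ and $1/R^2=4/d^2$ which is \emph{not} bounded by $C/t$; here I instead use $R=\sqrt t$... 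Actually the correct choice making the proof work is $R=\sqrt t$ always (heat kernel is regular at scale $\sqrt t$ even near the diagonal), which gives $1/R^2=1/t$ cleanly, and then $\ln(M/m)\ls C_{N,K}(1+d^2/t+t)$, and $\ln(M/H(x,y,t))\ls \ln(M/m)\ls C_{N,K}(1+d^2/t+t)$. Multiplying out gives
\begin{equation*}
|\nabla\ln H(x,y,t)|^2\ls C_{N,K}\Big(\frac1t+K\Big)\Big(1+\frac{d^2}{t}+t\Big)^2 .
\end{equation*}
To recover the stated estimate with only the first power of $(1+d^2/t+t)$, I would run a bootstrap: the crude bound above lets one improve the on-cylinder oscillation of $\ln H$ to $O(1+d^2/t+t)$ \emph{uniformly} (rather than just at the center), but more efficiently, one re-examines which of the two factors in \eqref{eq1.4} is really the oscillation. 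The factor $\ln(M/u(x,t))$ is the one we control by $\ln(M/m)$; the factor $\big(\ln(M/m)/R^2+1/t+K\big)$ is a \emph{coefficient} and here, crucially, $\ln(M/m)/R^2 = \ln(M/m)/t$, so if we could show $\ln(M/m)\ls C$ independent of $d,t$ on a \emph{smaller} cylinder we would win — and indeed, shrinking $R$ and the time interval to scale $\delta R$, $\delta^2\tau$ makes the oscillation of the (locally almost-constant, by the Harnack inequality of \cite{stu96}) function $\ln H$ as small as $C\delta(1+d^2/t+t)$... This is the technical heart, and it is where I expect the main obstacle: balancing the shrinking scale $\delta$ against the loss $1/(\delta R)^2$ in the coefficient, one optimizes $\delta$ and the two powers of $(1+d^2/t+t)$ combine to a single power times $(1/t+K)$. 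Concretely, with oscillation $\sim\delta\Theta$ ($\Theta:=1+d^2/t+t$) on a cylinder of spatial scale $\delta\sqrt t$, \eqref{eq1.4} gives $|\nabla\ln H|^2\ls C(\delta\Theta/(\delta^2 t)+1/t+K)\cdot\delta\Theta = C(\Theta/(\delta t)+1/t+K)\delta\Theta$; choosing $\delta=\Theta^{1/2}$ is illegal if $\Theta>1$ so instead choose $\delta$ to balance, $\delta \sim \min\{1,\ (t\ \text{vs}\ \Theta)\}$... the clean outcome, after choosing $\delta \asymp 1$ but tracking that the \emph{oscillation bound itself} from the Gaussian estimate is $\Theta$ and cannot be beaten, is that one simply accepts the product and then observes $(1/t+K)\Theta^2 \le (1/t+K)\Theta \cdot \Theta$ and absorbs one $\Theta$ using $t\le\Theta$ and $d^2/t\le\Theta$ into redefining...

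Honestly, the cleanest route — and the one I would actually write — avoids the bootstrap: apply Theorem \ref{thm1.1} with $R=\sqrt t$, giving the factor $\ln(M/m)/R^2+1/t+K \le C_{N,K}(\Theta/t + 1/t + K) \le C_{N,K}(1/t+K)\Theta$ (using $\Theta\ge 1$ and $\Theta/t \le \Theta/t$, $K\le K\Theta$), and bounding the other factor $\ln(M/H(x,y,t))$ \emph{not} by $\ln(M/m)=\Theta$ but trivially — no, that is the same. So the product is $C_{N,K}(1/t+K)\Theta^2$, and to get $\Theta^1$ one genuinely needs a better oscillation bound near $(x,t)$ than the global Gaussian one, which the parabolic Harnack inequality supplies on small scales; the main obstacle is making that quantitative trade-off precise. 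For the $K=0$ case the $C_2 t$ terms in the Gaussian bounds disappear (by the Li--Yau Gaussian estimate on $RCD^*(0,N)$), so $\Theta$ reduces to $1+d^2/t$ and the same computation with $R=\sqrt t$, $1/R^2=1/t$, $K=0$ yields $|\nabla\ln H|^2\ls \frac{C_N}{t}(1+d^2/t)$ directly — here no bootstrap is needed because $\ln(M/m)/R^2 = \frac{C_N}{t}(1+d^2/t)$ and $\ln(M/H)\le C_N(1+d^2/t)$ would again give a square, so the same shrinking-scale refinement is required, and that is the one genuinely delicate point of the whole argument. I would organize the write-up as: (i) recall the two-sided Gaussian heat kernel bounds and volume doubling on $RCD^*(-K,N)$; (ii) the scale-$\sqrt t$ cylinder and the resulting $m,M$; (iii) the Harnack-based refinement of the oscillation on small cylinders; (iv) optimize the scale parameter and conclude; (v) specialize $K=0$.
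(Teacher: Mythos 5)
Your setup (Gaussian two-sided bounds, volume doubling, feed Theorem \ref{thm1.1} the resulting $m,M$) is exactly the paper's setup, but there is a genuine gap in how you choose the scale, and it is precisely the point you flag as ``the technical heart'' and cannot close. With $R=\sqrt t$ you get $\ln(M/m)/R^2\sim \Theta/t$ with $\Theta=1+d^2/t+t$, hence the unwanted $\Theta^2$, and your proposed shrinking-scale/Harnack bootstrap is never carried out (and, as you discover while writing it, the scale optimization does not obviously balance). The paper's move is the opposite of shrinking: fix $y_0$ and the point $(x,t)$, apply Theorem \ref{thm1.1} on $B_R(y_0)\times(T/2,T)$ for \emph{arbitrary} $R$, observe that $\ln(M/m)\ls C_5+C_4T+R^2/T$ grows \emph{exactly} quadratically in $R$ (the Gaussian exponent is $d^2/(3t)\ls R^2/T$ for $x\in B_R(y_0)$, while the volume ratio contributes only $O(1+\sqrt{KT})$ by Bishop--Gromov), so
\begin{equation*}
\frac{\ln(M/m)}{R^2}+\frac{2}{T}+K \ \longrightarrow\ \frac{1}{T}+\frac{2}{T}+K \qquad\text{as } R\to\infty,
\end{equation*}
while the second factor $\ln(M/u(x,t))\ls C_5+C_4T+d^2(x,y_0)/T$ is independent of $R$ once $R>2d(x,y_0)$ (because the upper Gaussian bound, with the negative exponential dropped, controls $M$ uniformly). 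Letting $R\to\infty$ then gives directly $|\nabla\ln H|^2\ls C_N(1/T+K)(C_5+C_4T+d^2/T)$, i.e.\ a single power of $\Theta$, with $C_4=K+2C_2=0$ when $K=0$. So the correct choice is not $R\asymp\sqrt t$ but $R\to\infty$: the coefficient $\ln(M/m)/R^2$ self-improves to $1/T$, and no bootstrap or small-scale Harnack refinement is needed. (One small further mismatch: you center the ball at $x$, whereas the paper centers it at $y_0$; centering at $y_0$ is what makes the Gaussian upper bound $\le C_1\mu(B_{\sqrt{t'}}(y_0))^{-1}e^{C_2t'}$ uniform in the spatial variable and keeps $M$ bounded as $R\to\infty$.)
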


We give the result in the case of  smooth manifolds, which may be of independent interest.
\begin{coro}\label{cor-rimannian}
Let $(M^n,g)$ be an $n$-dimensional complete Riemannian manifold with $Ric(M^n)\gs -k$, $k\gs0$.  Let  $H(x,y,t)$ be the heat kernel on $M^n$. Then, we have
  \begin{equation*}
  |\nabla \ln H(x,y,t)|^2  \ls c_{n,k} \Big(\frac{ 1}{t}+k\Big)\cdot \Big(1+\frac{d^2(x,y)}{t}+ t\Big).
\end{equation*}
for  every $(x,y,t)\in M^n\times M^n\times(0,\infty)$, where $c_{n,k} $ is  a  constant
 depends  only on $n$ and $k$.

Moreover, in the case where $Ric(M^n)\gs0$,  we have, for some  constant $c_n$, depending only on $n$,   that
 \begin{equation*}
  |\nabla \ln H(x,y,t)|^2   \ls\frac{c_n}{t}\cdot \Big(1+\frac{d^2(x,y)}{t}\Big).
\end{equation*}
\end{coro}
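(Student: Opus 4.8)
The plan is to obtain the corollary as the specialization of Theorem \ref{thm1.2} to the smooth Riemannian setting, the only genuine additional work being to upgrade an almost-everywhere inequality to one holding at every point. First I would recall the standard fact that a complete $n$-dimensional Riemannian manifold $(M^n,g)$ with $Ric(M^n)\gs -k$, equipped with its geodesic distance $d_g$ and Riemannian volume $\mathrm{vol}_g$, is a metric measure space satisfying $RCD^*(-k,n)$; moreover its Cheeger energy coincides with the classical Dirichlet energy $\int_M|\nabla\cdot|^2\,d\mathrm{vol}_g$, so the heat semigroup built in the metric-measure framework is the usual heat semigroup of $(M^n,g)$ and the heat kernel $H(x,y,t)$ of Theorem \ref{thm1.2} is exactly the classical minimal heat kernel of $M^n$ (which, since $Ric$ is bounded below, is also the unique one). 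Applying Theorem \ref{thm1.2} with $N=n$ (or $N=2$ if $n=1$, using $RCD^*(-k,1)\Rightarrow RCD^*(-k,2)$) and with $K=k$, respectively $K=0$, then yields the two claimed inequalities for almost every $(x,y,t)\in M^n\times M^n\times(0,\infty)$, with $c_{n,k}$ and $c_n$ taken to be the corresponding constants $C_{N,K}$ and $C_N$ furnished by Theorem \ref{thm1.2}.

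It remains to pass from ``almost every'' to ``every''. Here I would invoke classical parabolic regularity: on a smooth complete Riemannian manifold the heat kernel $H$ is smooth and strictly positive on $M^n\times M^n\times(0,\infty)$, hence $\ln H$ is smooth and $(x,y,t)\mapsto|\nabla\ln H(x,y,t)|^2$ is continuous; the right-hand sides of the two inequalities are visibly continuous in $(x,y,t)$ as well. Since $M^n\times M^n\times(0,\infty)$ is connected, a set of full measure is dense in it, and an inequality between continuous functions that holds on a dense set holds everywhere. This promotes the estimates to all $(x,y,t)$ and completes the argument.

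I do not expect a real obstacle here: the corollary is soft once Theorem \ref{thm1.2} is in hand, and the only points requiring a moment's care are the regularity upgrade just described and the bookkeeping that the $RCD^*(-k,n)$ hypothesis is available and that the metric-measure and classical heat kernels agree. (Should one prefer a derivation not routed through Theorem \ref{thm1.2}, one could apply Theorem \ref{thm1.1} directly to $H(x,\cdot,\cdot)$ on balls $B_{R}(y)\times(t/2,t)$, optimize in $R$, and feed in the two-sided Gaussian/Li--Yau heat kernel bounds together with Bishop--Gromov volume comparison to control $\ln(M/m)$ and $\ln(M/u)$; this, however, merely reproduces the proof of Theorem \ref{thm1.2} in the smooth case.)
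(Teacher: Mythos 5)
Your proposal is correct and is exactly the route the paper intends: the paper gives no separate proof of Corollary~\ref{cor-rimannian}, treating it as the specialization of Theorem~\ref{thm1.2} to the smooth setting via the standard fact that $(M^n,g)$ with $Ric\gs -k$ is $RCD^*(-k,n)$ with matching heat kernel. Your explicit upgrade from ``almost every'' to ``every'' via smoothness of $H$ and continuity of both sides is the right (and, strictly speaking, necessary) way to close the small gap between the a.e.\ conclusion of Theorem~\ref{thm1.2} and the pointwise statement of the corollary, which the paper leaves implicit.
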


\begin{rem} (1) \ \ This result is sharp, since one has, on $\mathbb R^n$, that
$$  |\nabla \ln H(x,y,t)|^2 (x,t) =\frac 1 t\cdot\frac{|x-y|^2}{4t}.$$
(2) \ \ In the case of compact manifolds, this result has been proved in \cite{hus99,str98,eng06} via Malliavin's  calculus.  In the case of non-compact manifolds, this result improves  the previous estimates of Souplet and Zhang   in \cite{sz06}, by using their elliptic Li-Yau gradient estimate Theorem\ref{thm-sz}. Very recently, under to add an assumption that the time is bounded, a simalar result has been obtained on non-compact Riemannian manifolds with appropriate  Bakry-Emery conditions by  Li \cite{lixd16}.
\end{rem}

The second application of Theorem \ref{thm1.1}  is the Lipschitz regularity of   locally weak solutions of the heat equation on $RCD^*(K,N)$  metric measure spaces. Let $u$ be a locally weak solution   of the heat equation on  $B_{R,T}:=B_R\times(0,T)$.  Recalling that the local boundedness and the Harnack inequality for $u$ have been established in \cite{stu95,stu96,mm13}. In particular, $u(\cdot,\cdot)$ must be locally H\"older continuity in $B_{R,T}$. On the other hand, in the case where $u(x,t)=H_tf$ is a global heat flow on $X$, the Lipschitz continuity of $u(\cdot,t)$ in $B_R$, for any $t\in(0,T)$, comes from the Bakry-Emery condition, see \cite{ags15,ags-duke}.
Here, from Theorem \ref{thm1.1},  we have the following locally Lipschitz continuity for  $u$.
\begin{coro}\label{cor1.5}
  Let  $K,N,X$ and $B_{R,T}$ be as in the above Theorem \ref{thm1.1}.  Assume $u$ be a locally weak solution of the heat equation on $B_{R,T}$. Then, for any $t\in(0,T)$, the function $u(\cdot,t)$ is Lipschitz continuous on $B_{R/2}.$
\end{coro}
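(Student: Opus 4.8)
The plan is to feed Theorem \ref{thm1.1} into the De Giorgi--Nash--Moser regularity already available for $u$. First I would fix $t_0\in(0,T)$ and a point $x_0\in B_{R/2}$, and choose $\rho>0$ and $0<a<b<T$ so that $\overline{B_{2\rho}(x_0)}\times[a,b]$ is a compact subset of $B_{R,T}$ and $t_0\in\big(\tfrac{a+b}{2},b\big)$ --- for instance $\rho=R/8$ and, with $\delta=\tfrac16\min\{t_0,T-t_0\}$, $a=t_0-3\delta$, $b=t_0+\delta$, which work uniformly in $x_0\in B_{R/2}$. By the local boundedness and local H\"older continuity results of \cite{stu95,stu96,mm13}, $u$ is bounded and continuous on the compact set $\overline{B_{3R/4}}\times[a,b]$, so one may pick a constant $c>0$, independent of $x_0$, with $0<m\ls u+c\ls M$ there. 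Since constants solve the heat equation, $v:=u+c$ is again a locally weak solution with $\nabla v=\nabla u$, and after a translation in time it satisfies the hypotheses of Theorem \ref{thm1.1} on $B_{2\rho}(x_0)\times(a,b)$.

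Then, applying Theorem \ref{thm1.1} to $v$ and using $m\ls v\ls M$, so that $\ln(M/v)\ls\ln(M/m)$, I would obtain a constant $L=L(N,K,R,t_0,T,m,M)$, independent of $x_0\in B_{R/2}$, with
\begin{equation*}
|\nabla u|^2=|\nabla v|^2=v^2\,|\nabla\ln v|^2\ls L^2\qquad\text{for a.e. }(x,t)\in B_{\rho}(x_0)\times\big(\tfrac{a+b}{2},b\big).
\end{equation*}
By Fubini, for a.e. $t\in\big(\tfrac{a+b}{2},b\big)$ the slice $u(\cdot,t)$ lies in $W^{1,2}_{\rm loc}(B_\rho(x_0))$ with minimal weak upper gradient at most $L$ a.e. Since an $RCD^*(-K,N)$ space is locally doubling, supports a local $(1,1)$-Poincar\'e inequality, and is a length space, a $W^{1,2}_{\rm loc}$ function whose weak upper gradient is bounded by $L$ admits a $C\,L$-Lipschitz representative on balls; as $u(\cdot,t)$ is already continuous it agrees with this representative, so $u(\cdot,t)$ is $C\,L$-Lipschitz on $B_\rho(x_0)$ for a.e. such $t$.

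Finally I would pass from ``a.e.\ $t$'' to ``every $t$'': fixing $x,y\in B_\rho(x_0)$, the map $t\mapsto|u(x,t)-u(y,t)|$ is continuous on $(0,T)$ by joint continuity of $u$ and is $\ls C\,L\,d(x,y)$ for a.e.\ $t$, hence for every $t$, in particular at $t_0$. This shows $u(\cdot,t_0)$ is $C\,L$-Lipschitz on each $B_\rho(x_0)$, $x_0\in B_{R/2}$, with $L$ independent of $x_0$; covering $\overline{B_{R/2}}$ by finitely many such balls and estimating $|u(x,t_0)-u(y,t_0)|$ along curves joining $x$ to $y$ (using the length structure of $X$) then gives Lipschitz continuity of $u(\cdot,t_0)$ on $B_{R/2}$. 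I expect this last upgrade to be the main obstacle: Theorem \ref{thm1.1} controls $|\nabla u|$ only almost everywhere in space-time, and converting that into a pointwise Lipschitz bound on every time slice genuinely needs both the a priori H\"older continuity of weak solutions and the fact that on PI length spaces an $L^\infty$ bound on the minimal weak upper gradient forces Lipschitz regularity.
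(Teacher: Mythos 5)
Your proof is correct and follows essentially the same route as the paper: add a constant to make $u$ bounded between two positive numbers, apply Theorem \ref{thm1.1} to get an a.e.\ $L^\infty$ bound on $|\nabla u|$, deduce Lipschitz continuity of a.e.\ time slice, and then use the joint (H\"older) continuity of $u$ to pass to the given time $t_0$. You spell out two steps that the paper treats implicitly --- the PI-space fact that an $L^\infty$ bound on the minimal weak upper gradient forces a Lipschitz representative, and the ``a.e.\ $t\Rightarrow$ every $t$'' upgrade via continuity of $t\mapsto|u(x,t)-u(y,t)|$ --- but the underlying argument is the same.
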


 At last, two immediate consequences of Theorem \ref{thm1.1} is the following Hamilton's gradient estimates on non-compact case and a Liouville's theorem for   ancient solutions of the heat equation.
 \begin{coro}\label{cor1.6}
Let $X,K,N$ be as the above Theorem \ref{thm1.1} and let $T\in(0,\infty)$. Assume that  $u(x,t)\in W_{\rm loc}^{1,2}\big(X\times(0,T)\big)$ is a weak solution of the heat equation on $X\times (0,T)$. Suppose also that there exist $M,m>0$ such that $m\ls u\ls M$ on $X\times (0,T)$.   Then we have
\begin{equation*}
  |\nabla f|^2  (x,t)\ls C_N\cdot\Big(\frac{1}{T}+K\Big)\cdot\ln\frac{ M}{u(x,t)}
\end{equation*}
for almost every $(x,t)\in X\times(T/2,T)$, where $f=\ln u $ and the constant $C_N$  depends only on $N$.
\end{coro}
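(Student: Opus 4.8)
The plan is to obtain Corollary \ref{cor1.6} simply by letting the radius $R$ in Theorem \ref{thm1.1} tend to infinity. The point is that here the hypothesis ``$m\ls u\ls M$'' is \emph{global} on $X\times(0,T)$, so one may apply the local estimate \eqref{eq1.4} on arbitrarily large geodesic balls while keeping the same constants $M$ and $m$ — and hence the same dimensional constant $C_N$ — throughout; the ``bad'' term $\ln(M/m)/R^2$ then disappears in the limit, while $1/T$ and $K$ are untouched.

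Concretely, I would first fix a basepoint $o\in X$. For each $R>0$ the restriction $u|_{B_R(o)\times(0,T)}$ is a locally weak solution of the heat equation on $B_{R,T}:=B_R(o)\times(0,T)$ and still satisfies $m\ls u\ls M$ there, so Theorem \ref{thm1.1} gives, for a.e. $(x,t)\in B_{R/2}(o)\times(T/2,T)$,
\begin{equation*}
|\nabla f|^2(x,t)\ls C_N\cdot\Big(\frac{\ln(M/m)}{R^2}+\frac1T+K\Big)\cdot\ln\frac{M}{u(x,t)},
\end{equation*}
with one and the same $C_N$ for every $R$. Next I would take the exhaustion $R_j:=2^j$, $j\in\mathbb N$, so that the concentric balls $B_{R_j/2}(o)$ increase to $X$, and for each $j$ denote by $E_j$ the null set in $B_{R_j/2}(o)\times(T/2,T)$ where the displayed inequality with $R=R_j$ fails. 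Setting $E:=\bigcup_{j}E_j$ (a countable union of null sets, hence null), I would fix $(x,t)\in\big(X\times(T/2,T)\big)\setminus E$; since $x$ lies in $B_{R_j/2}(o)$ for all large $j$, the inequality above holds along the whole tail of the sequence, and letting $j\to\infty$ kills the term $\ln(M/m)/R_j^2$. This yields
\begin{equation*}
|\nabla f|^2(x,t)\ls C_N\cdot\Big(\frac1T+K\Big)\cdot\ln\frac{M}{u(x,t)}\qquad\text{for a.e. }(x,t)\in X\times(T/2,T),
\end{equation*}
which is exactly the assertion.

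There is essentially no serious obstacle beyond Theorem \ref{thm1.1} itself: this is a one-line limiting argument. The only two points requiring attention are (i) that the constant in \eqref{eq1.4} does not deteriorate as $R\to\infty$, which is guaranteed by the statement of Theorem \ref{thm1.1} since $C_N$ depends on $N$ only, and (ii) that the exceptional null set in \eqref{eq1.4} a priori depends on $R$, which is why one argues along a fixed countable sequence of radii rather than passing to a single null set directly. I therefore expect the bulk of the (minor) work to be this measure-theoretic bookkeeping; the analytic content is entirely contained in Theorem \ref{thm1.1}. (Running the same device with both $R\to\infty$ and a time-shift sending the length of the interval to infinity would, for a bounded \emph{ancient} solution, give $|\nabla f|^2\ls C_N\,K\ln(M/u)$ and hence a Liouville statement when $K=0$; this is a separate corollary and is not part of the present claim.)
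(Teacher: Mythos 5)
Your proposal is correct and follows exactly the same route as the paper: apply Theorem \ref{thm1.1} on $B_R\times(0,T)$ with the global bounds $m\ls u\ls M$ and let $R\to\infty$ so that the $\ln(M/m)/R^2$ term vanishes. The extra care you take with the countable exhaustion of radii to control the $R$-dependent null sets is a reasonable (if routine) refinement that the paper leaves implicit.
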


\begin{rem}
This is an extension of Theorem \ref{thm-hami} to non-compact spaces. Indeed, in Theorem \ref{thm-hami}, since $u$ is bounded from above and positive,    the Harnack inequality implies that $u$ must be  bounded from below by a positive number. Very recently, Theorem \ref{thm-hami} has been extended to non-compact Riemannian manifolds with appropriate  Bakry-Emery conditions by Li \cite{lixd16}.
 \end{rem}
\begin{coro}\label{cor1.8}
  Let  $(X,d,\mu)$  be a metric measure space satisfying $RCD^*(0,N)$ for some $N\in[1,\infty)$. Assume that $u(x,t)$ is an ancient solution of the heat equation on $X\times(-\infty,0]$.   If
\begin{equation}\label{eq1.6}
\liminf_{R\to\infty}\frac{\sup_{B_R\times(- R^2,0)}|u|}{R} =0,
\end{equation}
  then $u$ is a constant.
 \end{coro}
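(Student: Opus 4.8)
The plan is to run a standard localization argument: apply Theorem \ref{thm1.1} on an exhausting family of space–time cylinders and let their size tend to infinity, exploiting that after a harmless normalization the coefficient $\ln(M/m)/R^2$ in \eqref{eq1.4} becomes $R$-independent. First I would reduce to the range $N\in(1,\infty)$ covered by Theorem \ref{thm1.1}, which is legitimate since $RCD^*(0,N)\subset RCD^*(0,N')$ for any $N'\ge N$. Fix a base point $x_0\in X$ and put $\Phi(R):=\sup_{B_R(x_0)\times(-R^2,0)}|u|$. Since balls about different centers differ by a bounded amount, the quantity $\liminf_{R\to\infty}\Phi(R)/R$ does not depend on the center, so hypothesis \eqref{eq1.6} provides a sequence $R_j\to\infty$ with $0<\Phi(R_j)<\infty$ and $\Phi(R_j)/R_j\to0$ (if $\Phi\equiv0$ then $u\equiv0$ and there is nothing to prove, and $\Phi$ is nondecreasing so positivity persists).

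The key step is the following use of Theorem \ref{thm1.1}. Fix $j$ and write $R=R_j$. On $B_R(x_0)\times(-R^2,0)$ we have $|u|\le\Phi(R)$, so the shifted function $v:=u+2\Phi(R)$ is again a locally weak solution of the heat equation on this cylinder, with $\Phi(R)\le v\le3\Phi(R)$. After translating the time variable so that the cylinder becomes $B_R(x_0)\times(0,R^2)$, I apply Theorem \ref{thm1.1} to $v$ with $K=0$, radius $R$, $T=R^2$, $m=\Phi(R)$, $M=3\Phi(R)$. Since $\nabla\ln v=\nabla u/v$, $v\le3\Phi(R)$, and $\ln\big(3\Phi(R)/v\big)\le\ln3$, this yields, for a.e.\ $(x,t)\in B_{R/2}(x_0)\times(-R^2/2,0)$,
\[
|\nabla u|^2(x,t)\ls C_N\,\big(3\Phi(R)\big)^2\Big(\frac{\ln3}{R^2}+\frac{1}{R^2}\Big)\ln3\ls C_N'\,\frac{\Phi(R)^2}{R^2},
\]
where $C_N'$ depends only on $N$. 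The crucial point is that the normalization makes $\ln(M/m)=\ln3$ independent of $R$, so the bound decays like $\Phi(R)^2/R^2$.

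Now let $j\to\infty$ along the chosen sequence. The right-hand side tends to $0$ because $\Phi(R_j)/R_j\to0$, while the cylinders $B_{R_j/2}(x_0)\times(-R_j^2/2,0)$ exhaust $X\times(-\infty,0)$ since $R_j/2\to\infty$; hence $|\nabla u|=0$ a.e.\ on $X\times(-\infty,0)$. To finish, I would use that an $RCD^*(0,N)$ space is geodesic, hence connected, so that a $W^{1,2}_{\rm loc}$ function with vanishing minimal weak upper gradient is constant; applied at a.e.\ time this gives $u(x,t)=c(t)$ for a.e.\ $(x,t)$. Testing the weak formulation of the heat equation against products $\psi(t)\eta(x)$ and using $\nabla u=0$ forces $c'\equiv0$, so $c$ is constant, i.e.\ $u$ is a.e.\ constant; finally the (interior) Hölder continuity of weak solutions from \cite{stu95,stu96,mm13} upgrades this to genuine constancy on $X\times(-\infty,0)$, and by continuity of the ancient solution up to $t=0$ as well.

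I do not expect a genuine obstacle here: the argument is a routine localization once Theorem \ref{thm1.1} is in hand. The only thing to watch is the scale matching — the growth condition \eqref{eq1.6} is tuned precisely to cylinders of the shape $B_R\times(-R^2,0)$, which is exactly what makes the normalized Li–Yau coefficient $R$-independent and lets the estimate vanish in the limit — together with the (standard) passage from a vanishing gradient to constancy on a connected metric measure space supporting a Poincaré inequality, and the minor bookkeeping needed to handle the degenerate case $\Phi\equiv 0$.
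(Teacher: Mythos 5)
Your proposal is correct and follows the same route as the paper: normalize by $v:=u+2\Phi(R)$ so that $M/m=3$ on the cylinder $B_R\times(-R^2,0)$, apply Theorem~\ref{thm1.1} with $T=R^2$ and $K=0$ to get $|\nabla u|^2\ls C_N'\,\Phi(R)^2/R^2$ on $B_{R/2}\times(-R^2/2,0)$, and let $R\to\infty$ along a minimizing sequence to force $|\nabla u|\equiv0$. The only differences are cosmetic: the paper stops at ``$|\nabla u|=0$ a.e.\ finishes the proof,'' whereas you spell out the remaining (standard) steps from vanishing gradient to genuine constancy in both variables, and you explicitly dispose of the degenerate case $\Phi\equiv0$; neither alters the argument's substance.
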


 \subsection*{Organization of the paper}
In Section 2, we will provide some necessary
materials on $RCD^*(K,N)$ metric measure spaces.
In  Section 3, we will prove Theorem \ref{thm1.1},Theorem \ref{thm1.2} and the Corollary \ref{cor1.5}, \ref{cor1.6} and \ref{cor1.8}.

{\noindent\bf{Acknowledgements}.} H. C. Zhang is partially supported by NSFC 11521101.

\section{Preliminaries}

Let $(X,d)$ be a proper (i.e.,  closed balls of finite radius are compact) complete metric space and $\mu$ be a Radon measure on $X$ with ${\rm supp}(\mu)=X.$ Denote by $B_r(x)$ the open ball centered at $x$ with radius $r$. For any open subset $\Omega\subset X$ and any $p\in[1,\infty]$, we denote by $L^p(\Omega):=L^p(\Omega,\mu)$.

\subsection{Riemannian curvature-dimension conditions  \emph{RCD*}(\emph{K,N}) }$\ $

The  \emph{curature-dimension condition} on  $(X,d,\mu)$  has been introduced by Sturm \cite{stu06-1,stu06-2}
 and Lott-Villani  \cite{lv09}. Given two constants $K\in \R $ and $N\in[1,\infty]$, the curvature-dimension condition $CD(K,N)$ is
  a synthetic notion  for ``generalized Ricci curvature   $\gs K$ and dimension $\ls N$" on  $(X,d,\mu)$.
  Bacher-Sturm \cite{bs10} introduced the \emph{reduced} curvature-dimension condition $CD^*(K,N)$, and  Ambrosio et al. \cite{ags15} introduced
   the  \emph{Riemannian} curvature-dimension condition $RCD(K,\infty)$.
Very recently,  Erbar et al. \cite{eks15} and Ambrosio et al. \cite{ams16} introduced a dimensional
 version of \emph{Riemannian curvature-dimension condition} $RCD^*(K,N)$.
 In the case of Riemannian geometry, the notion  $RCD^*(K,N)$ coincides with the original  Ricci curvature $\gs K$ and dimension $\ls N$. In the setting of  Alexandrov geometry, it is implied by generalized (sectional) curvature bounded below in the sense of Alexandrov \cite{pet11,zz10}.

We denote by $\mathscr P_2(X,d)$  the $L^2$-Wasserstein space over $(X,d)$, i.e., the set of all Borel probability measures $\nu$ with
$$\int_Xd^2(x_0,x)d\nu(x)<\infty$$
for some (hence for all) $x_0\in X$. Given $\nu_1,\nu_2\in \mathscr P_2(X,d)$, their $L^2$-Wasserstein distance is defined by
$$ W_2^2(\nu_0,\nu_1):=\inf\int_{X\times X}d^2(x,y)dq(x,y)$$
where the infimum is taken over all couplings $q$ of $\nu_1$ and $\nu_2$, i.e., Borel probability measures $q$ on $X\times X$ with marginals $\nu_0$ and $\nu_1.$ Such a coupling $q$ realizes the $L^2$-Wasserstein distance is called an \emph{optimal coupling} of $\nu_0$ and $\nu_1.$

Given a measure $\nu\in \mathscr P_2(X,d)$, its relative entropy is defined by
$$ {\rm Ent}(\nu):=\int_X\rho\ln \rho d\mu,$$
if $\nu=\rho\cdot\mu$ is absolutely continuous w.r.t. $\mu$ and $(\rho\ln \rho)_+$ is integrable. Otherwise we set ${\rm Ent}(\nu)=+\infty.$ Let $\mathscr P^*_2(X,d,\mu)\subset  \mathscr P_2(X,d)$ be the subset of all measures $\nu$ such that ${\rm Ent}(\nu)<\infty$.

\begin{defn}[\cite{eks15}]
Given $K\in\mathbb R$ and $N\in[1,\infty)$. A metric measure space $(X,d,\mu)$ is called to satisfy the \emph{entropy curvature-dimension condition} $CD^e(K,N)$ if any only if for each pair $\nu_0,\nu_1 \in \mathscr P^*_2(X,d,\mu)$ there exist a constant speed geodesic $(\nu_t)_{0\ls t\ls1}$ in $ \mathscr P^*_2(X,d,\mu)$ connecting $\nu_0$ to $\nu_1$ such that for all $t\in[0,1]$:
\begin{equation*}
\begin{split}
U_N(\nu_t)\gs  \sigma^{(1-t)}_{K/N}\big(W_2(\nu_0,\nu_1)\big)\cdot U_N(\nu_0)+ \sigma^{(t)}_{K/N}\big(W_2(\nu_0,\nu_1)\big)\cdot U_N(\nu_1),
\end{split}
\end{equation*}
where $U_N(\nu):=\exp\big(-\frac{1}{N}{\rm Ent}(\nu)\big)$ and the function
\begin{equation*}
\sigma^{(t)}_k(\theta):=
\begin{cases}
\frac{\sin(\sqrt k\cdot t\theta)}{\sin(\sqrt k\cdot \theta)},& \quad 0<k\theta^2<\pi^2,\\
t, &\quad  k\theta^2=0,\\
\frac{\sinh(\sqrt{-k}\cdot t\theta)}{\sinh(\sqrt{- k}\cdot \theta)},& \quad k\theta^2<0,\\
\infty,&\quad  k\theta^2\gs\pi^2.
\end{cases}
\end{equation*}
\end{defn}

Given a function $f\in C(X)$, the \emph{pointwise Lipschitz constant} (\cite{che99}) of $f$ at $x$ is defined by
\begin{equation*}
{\rm Lip}f(x):=\limsup_{y\to x}\frac{|f(y)-f(x)|}{d(x,y)}=\limsup_{r\to0}\sup_{d(x,y)\ls r}\frac{|f(y)-f(x)|}{r},
\end{equation*}
where we put ${\rm Lip}f(x)=0$ if $x$ is isolated. Clearly, ${\rm Lip}f$ is a $\mu$-measurable function on $X.$
The \emph{Cheeger energy}, denoted by ${\rm Ch}:\ L^2(X)\to[0,\infty]$, is defined  (\cite{ags14}) by
$${\rm Ch}(f):=\inf\Big\{\liminf_{j\to\infty}\frac 1 2\int_X({\rm Lip}f_j )^2d\mu\Big\},$$
where the infimum is taken over all sequences of Lipschitz functions $(f_j)_{j\in\mathbb N}$ converging to $f$ in $L^2(X).$ In general, ${\rm Ch}$ is a convex and lower semi-continuous functional on $L^2(X)$.
\begin{defn}[\cite{ags14,eks15}]
A metric measure space $(X,d,\mu)$ is called \emph{infinitesimally Hilbertian} if the associated Cheeger energy is quadratic. Moreover,  $(X,d,\mu)$ is said to satisfy \emph{Riemannian curvature-dimension condition} $RCD^*(K,N)$, for  $K\in\mathbb R$ and $N\in[1,\infty)$,  if it is infinitesimally Hilbertian and satisfies the $CD^e(K,N)$ condition.
\end{defn}

Let $(X,d,\mu)$ be a  metric measure space  satisfying   $RCD^*(K,N)$. For each $f\in D({\rm Ch}):=\{f\in L^2(X)\ :\ {\rm Ch}(f)<\infty\}$, it has
$${\rm Ch}(f)=\frac{1}{2}\int_X|\nabla f|^2d\mu,$$
where $|\nabla f|$ is the so-called minimal relaxed gradient of $f$ (see \S4 in \cite{ags14}). It was proved in \cite[Lemma 4.3]{ags14} that Lipschitz functions are
 dense in $ D(\rm Ch)$ in the sense that, for each   $f\in D(\rm Ch)$, there exist  a sequence of Lipschitz functions $(f_j)_{j\in\mathbb N}$ such that
 $f_j\to f$ in $L^2(X)$ and $|\nabla (f_j- f)|\to 0$ in $L^2(X)$.
Since the Cheeger energy ${\rm Ch}$ is a quadratic form, by the polarization, the minimal relaxed gradients bring an inner product as following: given $f,g\in D(\rm Ch)$, it was proved in \cite{gig15} that the limit
$$\ip{\nabla f}{\nabla g}:=\lim_{\epsilon\to0}\frac{|\nabla (f+\epsilon\cdot g)|^2-|\nabla f|^2}{2\epsilon}$$
exists in $L^1(X).$ The inner product is bi-linear and satisfies Cauchy-Schwarz inequality, Chain rule and Leibniz
rule (see Gigli \cite{gig15}).

 \subsection{Sobolev spaces and the weak Laplacian} $\  $

Given $K\in\mathbb R$ and $N\in[1,\infty)$. Let $(X,d,\mu)$ be an $RCD^*(K,N)$ metric measure space.  Several different notions of Sobolev spaces on  $(X,d,\mu)$   have been established in  \cite{che99,shan00,ags13-lip,hk00,haj03}. They coincide each other on  $RCD^*(K,N)$ metric measure spaces (see, for example, \cite{ags13-lip}).

Let $\Omega\subset X$ be a domain.  We denote by $Lip_{\rm loc}(\Omega)$ the set of locally Lipschitz continuous functions on $\Omega$, and by $Lip(\Omega)$ (resp. $Lip_0(\Omega)$) the set of Lipschitz continuous functions on $\Omega$ (resp, with compact support in $\Omega$).

For any $1\ls p\ls +\infty$ and
   $f\in Lip_{\rm loc}(\Omega)$, its $W^{1,p}(\Omega)$-norm is defined by
$$\|f\|_{W^{1,p}(\Omega)}:=\|f\|_{L^{p}(\Omega)}+\|{\rm Lip}f\|_{L^{p}(\Omega)}.$$
 The Sobolev spaces $W^{1,p}(\Omega)$ is defined by the closure of the set
$$\big\{f\in Lip_{\rm loc}(\Omega):\ \|f\|_{W^{1,p}(\Omega)}<+\infty\big\}$$
under  the $W^{1,p}(\Omega)$-norm.
The space $W_0^{1,p}(\Omega)$ is defined by the closure of $Lip_0(\Omega)$ under  the $W^{1,p}(\Omega)$-norm.
We say that a function $f\in W^{1,p}_{\rm loc}(\Omega)$ if $f\in W^{1,p}(\Omega')$ for every open subset $\Omega'\subset\subset\Omega.$

Since $(X,d,\mu)$ is assumed  to be infinitesimally Hilbertian, it is well known that $D({\rm Ch})=W^{1,2}(X)$, see, for example, \cite[Lemma 2.5]{zz-cvpde}.
 Given any   function $f\in W^{1,2}(X)$, the  $W^{1,2}$-norm of $f$, $\|f\|_{W^{1,2}(X)}=\|f\|_{L^2(X)}+2{\rm Ch}(f).$

 Fix any   open set $\Omega\subset X$ and $p\in(1,\infty)$. According to \cite[\S 4.1]{gig15}, the space $W^{1,2}(\Omega)$ is still a Hilbert space, and  for any $f,g\in W^{1,2}_{\rm loc}(\Omega)$, the function $|\nabla f|^2$ and $\ip{\nabla f}{\nabla g}$ are well defined in $ L^1_{\rm loc}(\Omega)$.
In the sequel of this paper, we will always denote by  $H^1_0(\Omega):=W^{1,2}_0(\Omega)$, $H^1(\Omega):=W^{1,2}(\Omega) $ and $H^1_{\rm loc}(\Omega):=W_{\rm loc}^{1,2}(\Omega).$

\begin{defn}
For each $f\in H^1_{\mathrm{loc}}(\Omega)$, the distribution $\mathscr Lf$ is a functional defined on  $H^1_0(\Omega)\cap L^\infty(\Omega)$ by
$$\mathscr Lf(\phi):=-\int_\Omega\ip{\nabla f}{\nabla \phi}d\mu\qquad \forall\ \phi\in H^1_0(\Omega)\cap L^\infty(\Omega).$$
For any $g\in H^{1}(\Omega) \cap L^{\infty}(\Omega)$, the distribution $g\cdot\mathscr Lf$ is defined by
\begin{equation*}
g\cdot\mathscr Lf(\phi):= \mathscr Lf(g\phi)\qquad \forall\ \phi\in H^1_0(\Omega)\cap L^\infty(\Omega).
\end{equation*}
\end{defn}
By the linearity of inner product  $\ip{\nabla f}{\nabla g}$, this distributional Laplacian is linear.

If, given  $f\in H_{\rm loc}^{1}(\Omega)$, there exists a function $g\in L^1_{\rm loc}(\Omega)$ such that
\begin{equation*}
\mathscr Lf(\phi)\gs\int_\Omega g\cdot\phi d\mu\qquad \forall\ 0\ls \phi\in H^1_0(\Omega)\cap L^\infty(\Omega),
\end{equation*}
then we say that  ``$\mathscr Lf\gs g $ in the sense of distributions". In this case, $\mathscr L f$ is a signed Radon measure, and hence, we also denote ``$\mathscr Lf\gs g\cdot \mu $  in the sense of measures". It is similar when we replace ``$\gs$" by ``$=$" or by ``$\ls$".

$\mathscr L$ satisfies the following Chain rule and Leibniz rule  \cite{gig15}, see also \cite[Lemma 3.2]{zz-cvpde}.
\begin{lem}[\cite{gig15,zz-cvpde}]\label{lem2.4}
Given $K\in \mathbb R$ and $N\in[1,\infty)$. Let $\Omega$ be an open domain of an $RCD^*(K,N)$ metric measure space $(X,d,\mu)$.\\
{\rm(i)\ (Chain rule)}\ \ \  Let $f\in  H^{1}(\Omega)\cap L^\infty(\Omega)$ and $\eta\in C^2(\mathbb R)$. Then we have, in the sense of distributions,
\begin{equation*}
\La[\eta(f)]=\eta'(f)\cdot\La f+\eta''(f)\cdot|\nabla f|^2.
\end{equation*}
{\rm(ii)\ (Leibniz rule)}\ \ \ Let $f,g\in  H^{1}(\Omega)\cap L^\infty(\Omega)$. Then we have, in the sense of distributions,
\begin{equation*}
\La(f\cdot g)=f\cdot\La g+g\cdot\La f+2\ip{\nabla f}{\nabla g}.
\end{equation*}
\end{lem}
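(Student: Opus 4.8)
The plan is to obtain both identities by simply unwinding the definitions of $\La f$ and of the distribution $g\cdot\La f$, and then invoking the chain and Leibniz rules for the \emph{gradient} inner product $\ip{\nabla\cdot}{\nabla\cdot}$ proved by Gigli \cite{gig15} (valid in $L^1_{\mathrm{loc}}(\Omega)$). Before starting I would record the elementary membership facts that make the statement meaningful. Since $f\in H^1(\Omega)\cap L^\infty(\Omega)$ has bounded range and $\eta\in C^2(\R)$, both $\eta$ and $\eta'$ are Lipschitz on a compact interval containing that range, so $\eta(f),\eta'(f)\in H^1(\Omega)\cap L^\infty(\Omega)$ (post-composition with a Lipschitz function preserves $H^1\cap L^\infty$); moreover, for any test function $\phi\in H^1_0(\Omega)\cap L^\infty(\Omega)$ the products $\eta'(f)\phi$, $f\phi$, $g\phi$ again lie in $H^1_0(\Omega)\cap L^\infty(\Omega)$ (bounded $H^1$ functions form an algebra in which $H^1_0$ is an ideal). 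In particular $\La[\eta(f)]$, $\La(fg)$, and $\eta'(f)\cdot\La f$, $f\cdot\La g$, $g\cdot\La f$ are all well-defined functionals on $H^1_0(\Omega)\cap L^\infty(\Omega)$, while $\eta''(f)|\nabla f|^2$ and $\ip{\nabla f}{\nabla g}$ belong to $L^1_{\mathrm{loc}}(\Omega)$.

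\textbf{The two computations.} For the chain rule, fix $\phi\in H^1_0(\Omega)\cap L^\infty(\Omega)$. On one hand, the definition of $\La$ followed by the chain rule for the gradient gives $\La[\eta(f)](\phi)=-\int_\Omega\ip{\nabla\eta(f)}{\nabla\phi}\,d\mu=-\int_\Omega\eta'(f)\ip{\nabla f}{\nabla\phi}\,d\mu$. On the other hand, by the definition of $\eta'(f)\cdot\La f$ and then the Leibniz and chain rules applied to $\ip{\nabla f}{\nabla(\eta'(f)\phi)}=\eta'(f)\ip{\nabla f}{\nabla\phi}+\eta''(f)|\nabla f|^2\,\phi$, one gets $\big(\eta'(f)\cdot\La f\big)(\phi)=\La f\big(\eta'(f)\phi\big)=-\int_\Omega\big(\eta'(f)\ip{\nabla f}{\nabla\phi}+\eta''(f)|\nabla f|^2\,\phi\big)\,d\mu$. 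Subtracting the two identities yields precisely $\La[\eta(f)](\phi)=\big(\eta'(f)\cdot\La f\big)(\phi)+\int_\Omega\eta''(f)|\nabla f|^2\,\phi\,d\mu$, i.e. (i). The Leibniz rule (ii) follows the same pattern: expanding $\ip{\nabla(fg)}{\nabla\phi}=f\ip{\nabla g}{\nabla\phi}+g\ip{\nabla f}{\nabla\phi}$, as well as $\ip{\nabla g}{\nabla(f\phi)}$ and $\ip{\nabla f}{\nabla(g\phi)}$ by the Leibniz rule for the gradient, and comparing, gives $\La(fg)(\phi)=\big(f\cdot\La g\big)(\phi)+\big(g\cdot\La f\big)(\phi)+2\int_\Omega\ip{\nabla f}{\nabla g}\,\phi\,d\mu$.

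\textbf{Main obstacle.} The only genuinely delicate point is the interplay between the \emph{global} differential calculus of \cite{gig15} (stated for $D(\mathrm{Ch})=W^{1,2}(X)$) and the \emph{local} setting at hand, where $f,g\in H^1_{\mathrm{loc}}(\Omega)$ and $\phi$ has compact support in $\Omega$. I would handle this by the standard localization: choose a relatively compact open $\Omega'$ with $\mathrm{supp}\,\phi\subset\subset\Omega'\subset\subset\Omega$ and a Lipschitz cut-off $\chi\in Lip_0(\Omega)$ with $\chi\equiv 1$ on a neighbourhood of $\overline{\Omega'}$; then $\chi f,\chi g\in W^{1,2}(X)\cap L^\infty(X)$, the global chain and Leibniz rules of \cite{gig15} apply to them, and by the locality of the minimal relaxed gradient (\S4.1 of \cite{gig15}) every integrand appearing above agrees $\mu$-a.e. on $\mathrm{supp}\,\phi$ with the corresponding one built from $\chi f,\chi g$, which is the only region contributing to the integrals. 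Likewise, the membership facts of the first paragraph — that post-composition with Lipschitz maps, and multiplication by bounded $H^1$ functions, keep one inside $H^1\cap L^\infty$, respectively $H^1_0\cap L^\infty$ — are routine but must be verified before the formal computation is legitimate. Granting these, the manipulations in the second paragraph are purely algebraic and complete the proof.
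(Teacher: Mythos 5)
Your argument is correct and is essentially the standard one: the paper does not prove Lemma \ref{lem2.4} but cites it from \cite{gig15} and \cite[Lemma 3.2]{zz-cvpde}, where the proof is exactly this unwinding of the definitions of $\La f$ and $g\cdot\La f$ against test functions, combined with Gigli's chain and Leibniz rules for $\ip{\nabla\cdot}{\nabla\cdot}$ and the routine membership/localization facts you record.
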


Let $\Omega\subset X$ be a domain. Given $T>0$, we denote by $\Omega_{T}:=\Omega\times(0,T].$
\begin{defn}
 A function $u(x,t)\in H^1(\Omega_T) \ (=W^{1,2}(\Omega_{T}))$ is called a   {locally weak  solution}  of  the heat equation on $\Omega_{T}$  if for any  $[t_1,t_2]\subset(0,T)$ and any geodesic ball $B_R\subset\subset \Omega$,  we have
\begin{equation*}
 \int^{t_2}_{t_1}\int_{B_R}\Big(\partial_t u\cdot\phi+\ip{\nabla u}{\nabla \phi}\Big)d\mu dt = 0
\end{equation*}
 for all  $\phi(x,t)\in  Lip_0(B_{R}\times(t_1,t_2)\big).$
Here and in the sequel, we denote always $\partial_tu:=\frac{\partial u}{\partial t}.$
\end{defn}
The local boundedness and the Harnack inequality for such weak solutions have been proved by Sturm \cite{stu95,stu96} in the setting of abstract local Dirichlet form and by Marola and Masson \cite{mm13} in the setting of metric measure space with a standard volume doubling property and supporting a $L^2$-Poincar\'e inequality. Of course,  it is vivid  for metric measure spaces satisfying $RCD^*(K,N)$ for some $K\in\mathbb R$ and $N\in[1,\infty).$

\subsection{The localized Bochner formula}$\ $

Combining with the works \cite{eks15, ams16} on a global version of Bochner formula  and  a good cut-off function in \cite{ams16,mn14,hkx13}, one can obtain the following localized  Bochner formula, see \cite[Corollary 3.6]{zz-cvpde} for details.
\begin{thm}[\cite{zz-cvpde}]\label{thm2.6}

 Let $(X,d,\mu)$  be a  metric measure space  satisfying   $RCD^*(K,N)$ for $K\in\mathbb R$ and $N\in[1,\infty)$. Let $B_R$ be a geodesic ball with radius $R$ and centered at a fixed point $x_0$.

  Assume that $f\in H^{1}(B_R)$ satisfies
$\mathscr Lf=g $ on $B_R$ in the sense of distributions with the function $g\in H^1(B_R)\cap L^\infty(B_R)$.
Then we have $|\nabla f|^2\in H^{1}(B_{R/2}) \cap L^\infty(B_{R/2})$  and that the distribution $\La(|\nabla f|^2)$ is a signed Radon measure on  $B_{R/2}$. If its Radon-Nikodym decomposition w.r.t. $\mu$ is denoted by
$$\La(|\nabla f|^2)=  \La^{\rm ac}(|\nabla f|^2)\cdot\mu+\La^{\rm sing}(|\nabla f|^2),$$
then we have  $\La^{\rm sing}(|\nabla f|^2)\gs0$ and, for $\mu$-a.e. $x\in B_{R/2},$
\begin{equation*}
 \frac{1}{2}\La^{\rm ac}(|\nabla  f |^2 ) \gs  \frac{g^2}{N}+\ip{\nabla f}{\nabla  g}+K|\nabla f|^2.
 \end{equation*}
Furthermore, if $N>1$, we have,  for  $\mu$-a.e. $  x\in B_{R/2}\cap \big\{y:\ |\nabla f(y)|\not=0\big\}$, an improvement estimate
\begin{equation}\label{eq2.1}
\frac{1}{2} \La^{\rm ac}(|\nabla f|^2)\gs \frac{ g^2}{N}+ \ip{\nabla f}{\nabla g}+K|\nabla f|^2+\frac{N}{N-1}\cdot\Big(\frac{\ip{ \nabla f}{\nabla |\nabla f|^2}}{2|\nabla f|^2}-\frac{ g}{N}\Big)^2.
\end{equation}
\end{thm}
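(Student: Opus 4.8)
The plan is to deduce the stated \emph{local} Bochner inequality from the known \emph{global} measure-valued Bochner (equivalently $\Gamma_2$-) inequality on $RCD^*(K,N)$ spaces, established in \cite{eks15,ams16} (see also Gigli's Hessian calculus \cite{gig15}), by cutting off with a good cut-off function whose Laplacian is bounded. Fix the centre $x_0$ of $B_R$ and choose, as in \cite{ams16,mn14,hkx13}, a cut-off $\varphi\in Lip(X)$ with $0\ls\varphi\ls1$, $\varphi\equiv1$ on $B_{3R/4}$, $\mathrm{supp}\,\varphi\subset B_{7R/8}$, $\La\varphi$ represented by an $L^\infty$ function, and $|\nabla\varphi|+|\La\varphi|\ls C(N,K,R)$.

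\emph{Regularity and the basic inequality.} Since $\La f=g$ on $B_R$ with $g\in L^\infty(B_R)$, local elliptic regularity for the Dirichlet form — valid on $RCD^*(K,N)$ thanks to the volume doubling property and the $L^2$-Poincaré inequality — gives that $f$ is locally Lipschitz on $B_R$, so $|\nabla f|\in L^\infty_{\rm loc}(B_R)$. Testing the global Bochner inequality against $\varphi^2$ then yields a Caccioppoli bound for $\int_X\varphi^2|\nabla|\nabla f|^2|^2d\mu$ in terms of $\|g\|_{H^1(B_R)}$, $\|\nabla f\|_{L^2(B_R)}$, $\||\nabla f|\|_{L^\infty(B_{7R/8})}$, $K$ and $R$, whence $|\nabla f|^2\in H^1(B_{R/2})\cap L^\infty(B_{R/2})$ and $\La(|\nabla f|^2)$ is a signed Radon measure on $B_{R/2}$. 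To get the differential inequality, fix $0\ls\phi\in Lip_0(B_{R/2})$ and apply the global Bochner inequality, tested against $\phi$, to the globally defined test-class functions $f_s:=\varphi\cdot H_s\tilde f$, where $\tilde f$ is any bounded global extension of $f$; since $\varphi\equiv1$ and $\nabla\varphi=\La\varphi=0$ on $\mathrm{supp}\,\phi$, the cut-off error terms vanish on $\mathrm{supp}\,\phi$, and letting $s\to0$ (using $\La(H_s\tilde f)\to g$ near $\mathrm{supp}\,\phi$) gives
\begin{equation*}
-\frac12\int_{B_{R/2}}\ip{\nabla|\nabla f|^2}{\nabla\phi}\,d\mu\ \gs\ \int_{B_{R/2}}\phi\Big(\frac{g^2}{N}+\ip{\nabla f}{\nabla g}+K|\nabla f|^2\Big)d\mu ,
\end{equation*}
that is, $\tfrac12\La(|\nabla f|^2)\gs\big(\tfrac{g^2}{N}+\ip{\nabla f}{\nabla g}+K|\nabla f|^2\big)\mu$ as measures on $B_{R/2}$. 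Since the right-hand side is absolutely continuous, the Radon--Nikodym decomposition yields both $\La^{\rm sing}(|\nabla f|^2)\gs0$ and the claimed a.e. bound for $\La^{\rm ac}(|\nabla f|^2)$.

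\emph{The improvement \eqref{eq2.1}.} For $N>1$ I would run the same localization with the dimensionally refined Bochner inequality. In the smooth case the refinement is the elementary matrix inequality: for a symmetric matrix $A$ and a unit vector $e$,
\begin{equation*}
|A|^2_{HS}\ \gs\ A_{ee}^2+\frac{(\mathrm{tr}\,A-A_{ee})^2}{N-1}\ =\ \frac{(\mathrm{tr}\,A)^2}{N}+\frac{N}{N-1}\Big(A_{ee}-\frac{\mathrm{tr}\,A}{N}\Big)^2,
\end{equation*}
applied with $A=\mathrm{Hess}\,f$ and $e=\nabla f/|\nabla f|$ at points where $\nabla f\ne0$, together with $A_{ee}=\ip{\nabla f}{\nabla|\nabla f|^2}/(2|\nabla f|^2)$ and $\mathrm{tr}\,A=\Delta f=g$. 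On $RCD^*(K,N)$ this is supplied by the Hessian form of the Bochner inequality and its dimensional refinement in \cite{eks15,ams16,gig15}; carrying out the cut-off argument above on $B_{R/2}\cap\{|\nabla f|\ne0\}$ — where $|\nabla f|^2$ is a positive $H^1$ function, so $\nabla|\nabla f|^2$ is well defined — then produces \eqref{eq2.1}.

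\emph{Main obstacle.} The delicate point is the localization: the global Bochner inequality is available only for functions in the global test class, whereas $f$ lives on $B_R$ and need not extend nicely. Making this rigorous requires the good cut-off functions of \cite{ams16,mn14,hkx13} with quantitative control on $\La\varphi$, a careful commutation of the cut-off with the heat-semigroup mollification so that all error terms remain supported away from $\mathrm{supp}\,\phi$, and the preliminary local Lipschitz bound for $f$, which is what makes $|\nabla f|\in L^\infty_{\rm loc}$ and legitimizes the Caccioppoli estimate. A secondary subtlety is that the refined term degenerates on $\{\nabla f=0\}$, which is why \eqref{eq2.1} is asserted only on $B_{R/2}\cap\{|\nabla f|\ne0\}$.
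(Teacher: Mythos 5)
Your overall outline does coincide with the route the paper points to: this theorem is not proved in the present paper at all, it is quoted from \cite[Corollary 3.6]{zz-cvpde}, and the indicated proof there is exactly ``global Bochner inequality of \cite{eks15,ams16} plus good cut-off functions of \cite{ams16,mn14,hkx13}'' together with a mollification by the heat semigroup, which is what you sketch. However, two of your steps are not yet proofs. First, a minor but real point: the local Lipschitz continuity of $f$ (equivalently $|\nabla f|\in L^\infty_{\rm loc}$) does \emph{not} follow from volume doubling and the $L^2$-Poincar\'e inequality; these only give local boundedness, the Harnack inequality and H\"older continuity. The $L^\infty_{\rm loc}$ gradient bound for solutions of $\La f=g$ with $g\in L^\infty$ genuinely uses the Riemannian (Bakry--\'Emery) structure of $RCD^*(K,N)$ spaces, via the gradient estimate for the heat semigroup (this is the Lipschitz regularity result of Jiang that \cite{zz-cvpde} invokes). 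Since this bound is what legitimizes your Caccioppoli estimate and the conclusion $|\nabla f|^2\in L^\infty(B_{R/2})$, the justification as you wrote it is wrong, even though the conclusion is true and citable.

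The more serious gap is in the improvement \eqref{eq2.1}. You assert that the dimensional refinement is ``supplied by the Hessian form of the Bochner inequality and its dimensional refinement in \cite{eks15,ams16,gig15}'', but those references contain only the $N$-Bochner inequality with the term $g^2/N$ (and, in \cite{gig15}, a Hessian lower bound without dimension); neither gives the extra term $\frac{N}{N-1}\big(\frac{\ip{\nabla f}{\nabla|\nabla f|^2}}{2|\nabla f|^2}-\frac{g}{N}\big)^2$. Your smooth-case matrix inequality does not transfer pointwise: in the nonsmooth setting one has no a.e.\ identity ${\rm tr\,Hess}\,f=\La f$ nor a pointwise Hessian--trace inequality available at this level of generality without substantial extra work. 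This self-improvement is precisely the nontrivial content of the theorem; in \cite{zz-cvpde} it is obtained by a Bakry--Qian type self-improvement argument (cf.\ Remark \ref{rem2.7} of the present paper and \cite{bq00}), applying the $N$-Bochner inequality to suitable compositions of $f$ with auxiliary functions and optimizing, which is what produces the extra square on $\{|\nabla f|\neq0\}$. As it stands, your sketch outsources the hardest step to references that do not contain it, so the localization part is fine in spirit, but the refined inequality still needs an actual proof (or the correct citation to \cite{zz-cvpde}).
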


\begin{rem}\label{rem2.7}
In this paper, the key fact is that the last term in the improvement estimate (\ref{eq2.1}) is enough to  compensate for    the lack of (\ref{eq1.5}) in general $RCD^*(K,N)$ metric measure spaces. In the smooth case, this idea is used in \cite{bq00}.
\end{rem}

\subsection{The pointwise maximum principles}$\ $

The pointwise maximum principle  states that, given a $C^2$-function $f$ defined on a smooth manifold $(M,g)$, if $f$ achieves one of its local maximum at point $x_0\in M$, then we have $\nabla f(x_0)=0$ and $\Delta f(x_0)\ls0.$  It is a powerful tool in geometric analysis.  However, it does not make sense on singular metric measure space in general.

To compensate for the lack of the pointwise maximum principle, the second author and Zhu in \cite[Theorem 1.3]{zz-cvpde} proved the following  analogous tool on general $RCD^{*}(K,N)$-spaces.
 \begin{thm}[\cite{zz-cvpde}]
Let $\Omega$ be a bounded domain in a metric measure space $(X,d,\mu)$ with $RCD^*(K,N)$ for some $K\in\mathbb R$ and $N\gs1.$
Let $f(x)\in H^{1}(\Omega)\cap L^\infty_{\rm loc}(\Omega)$  such that  $\La f $ is a signed Radon measure with $\La^{\rm sing}f\gs 0$, where $\La^{\rm sing}f$ is the singular part with respect to $\mu$.
Suppose that $f$ achieves one of its  strict maximum in  $\Omega$ in the sense that: there exists a neighborhood $U\subset\subset \Omega$ such that
\begin{equation*}
\sup_{U}f>\sup_{\Omega\backslash U}f.
\end{equation*}
Then, given any $w\in  H^{1}(\Omega)\cap L^\infty(\Omega)$, there exists a sequence of points $\{x_j\}_{j\in\mathbb N}\subset U$  such that they are the approximate continuity points of $\La^{\rm ac}f$ and $\ip{\nabla f}{\nabla w}$, and that
$$f(x_j)\gs \sup_\Omega f-1/j\qquad{\rm and }\qquad  \La^{\rm ac}f(x_j)+\ip{\nabla f}{\nabla w}(x_j)\ls 1/j.$$
\end{thm}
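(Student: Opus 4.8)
The plan is to argue by contradiction, converting the desired pointwise statement into an integration‑by‑parts identity against a suitably weighted test function. Let $\Lambda:=\operatorname{ess\,sup}_\Omega f$, which is finite since $f\in H^{1}(\Omega)\cap L^\infty_{\rm loc}(\Omega)$ and the strict maximum is attained over $U\subset\subset\Omega$; the strict‑maximum hypothesis also provides a constant $c>0$ with $f\ls\Lambda-c$ for $\mu$-a.e.\ point of $\Omega\setminus U$. Suppose the conclusion fails: there are $w\in H^{1}(\Omega)\cap L^\infty(\Omega)$ and $j_0\in\mathbb N$ such that \emph{no} $x\in U$ which is an approximate continuity point of both $\La^{\rm ac}f$ and $\ip{\nabla f}{\nabla w}$ satisfies both $f(x)\gs\Lambda-1/j_0$ and $\La^{\rm ac}f(x)+\ip{\nabla f}{\nabla w}(x)\ls1/j_0$. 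Since $RCD^*(K,N)$ spaces are locally doubling, the Lebesgue differentiation theorem applies, so $\mu$-a.e.\ point of $\Omega$ is an approximate continuity point of the $L^1_{\rm loc}$ functions $\La^{\rm ac}f$ and $\ip{\nabla f}{\nabla w}$. Putting $s_0:=\min\{1/j_0,\,c/2\}$ and $E:=\{f>\Lambda-s_0\}$, we then have $E\subset U\subset\subset\Omega$, $\mu(E)>0$ (as $\Lambda=\operatorname{ess\,sup}_\Omega f$), and
\[
\La^{\rm ac}f+\ip{\nabla f}{\nabla w}>\varepsilon_0:=1/j_0\qquad\text{for }\mu\text{-a.e.\ point of }E .
\]

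Next I would test the defining relation of $\La f$ against $\phi:=g\,e^{w}$, where $g:=(f-\Lambda+s_0)_+$. Since $g\in H^{1}(\Omega)$ vanishes $\mu$-a.e.\ outside the compact set $\overline E\subset\Omega$ and $w$ is bounded, both $g$ and $\phi$ lie in $H^{1}_0(\Omega)\cap L^\infty(\Omega)$; by the chain and Leibniz rules, $\nabla g=\mathbf{1}_E\nabla f$ and $\nabla\phi=e^{w}\big(\mathbf{1}_E\nabla f+g\,\nabla w\big)$ $\mu$-a.e. Hence
\[
\La f(\phi)=-\int_\Omega\ip{\nabla f}{\nabla\phi}\,d\mu=-\int_\Omega e^{w}|\nabla f|^2\mathbf{1}_E\,d\mu-\int_\Omega g\,e^{w}\ip{\nabla f}{\nabla w}\,d\mu .
\]
On the other hand, $\phi\gs0$ and $\La f=\La^{\rm ac}f\cdot\mu+\La^{\rm sing}f$ with $\La^{\rm sing}f\gs0$, so $\La f(\phi)\gs\int_\Omega g\,e^{w}\,\La^{\rm ac}f\,d\mu$; combining,
\[
\int_\Omega g\,e^{w}\big(\La^{\rm ac}f+\ip{\nabla f}{\nabla w}\big)\,d\mu\ls-\int_\Omega e^{w}|\nabla f|^2\mathbf{1}_E\,d\mu\ls0 .
\]
But $g\,e^{w}>0$ exactly on $E$, $\mu(E)>0$, and on $E$ the bracket exceeds $\varepsilon_0>0$ for $\mu$-a.e.\ point; hence the left‑hand side is $\gs\varepsilon_0\int_E g\,e^{w}\,d\mu>0$, a contradiction, which proves the theorem.

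The step I would think hardest about is the choice of test function: the point of the weight $e^{w}$ is that testing $\La f$ against $g\,e^{w}$ reproduces the drift term $\ip{\nabla f}{\nabla w}$ with the right sign — equivalently, the whole statement is unchanged if $\mu$ is replaced by the bi‑Lipschitz‑equivalent, still locally doubling and infinitesimally Hilbertian measure $e^{w}\mu$, for which $\La^{\rm ac}f+\ip{\nabla f}{\nabla w}$ is exactly the absolutely continuous part of the associated drift Laplacian while $\La^{\rm sing}f$ stays nonnegative, so one may as well prove only the case $w\equiv0$. Once this is seen the remaining points are routine: the passage from ``fails at every approximate continuity point in $U$'' to ``fails $\mu$-a.e.\ on $E$'' is the Lebesgue differentiation theorem on the (locally doubling) space, and this is precisely why the \emph{approximate}-continuity formulation of the conclusion is the natural one; the memberships $g,\phi\in H^{1}_0(\Omega)\cap L^\infty(\Omega)$ and the gradient formulas come from $f\in H^{1}(\Omega)\cap L^\infty_{\rm loc}(\Omega)$, the compact containment $\overline E\subset\subset\Omega$, and the calculus recalled above; and the inequality $\La f(\phi)\gs\int_\Omega\phi\,\La^{\rm ac}f\,d\mu$ for $\phi\gs0$ in $H^{1}_0(\Omega)\cap L^\infty(\Omega)$ rests on the standard fact that the measure $\La f$ attached to an $H^{1}$ function does not charge sets of zero capacity, so that the nonnegativity of $\La^{\rm sing}f$ transfers to the pairing.
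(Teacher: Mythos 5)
Your proof is correct in substance and takes essentially the same route as the paper's (a weak maximum principle via integration by parts against a level-set test function, then Lebesgue differentiation to extract approximate continuity points). The weighting $\phi=g\,e^{w}$ to absorb the drift term $\ip{\nabla f}{\nabla w}$ is a clean device and is equivalent to the change of measure $\mu\mapsto e^{w}\mu$ you describe.

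Two small points of care. First, you write that $g$ vanishes a.e.\ outside $\overline E\subset\Omega$; since $E=\{f>\Lambda-s_0\}$ is only defined up to $\mu$-null sets, the clean statement is that $g$ vanishes $\mu$-a.e.\ outside the compact set $\overline U\subset\Omega$ (because $s_0<c$ forces $f\ls\Lambda-s_0$ a.e.\ on $\Omega\setminus U$), and this is what gives $g\in H^1_0(\Omega)$. Second, the inequality $\La f(\phi)\gs\int_\Omega\phi\,\La^{\rm ac}f\,d\mu$ for $\phi\in H^1_0(\Omega)\cap L^\infty(\Omega)$, $\phi\gs0$, which lets you exploit $\La^{\rm sing}f\gs0$, really does rest on the quasi-continuity/capacity facts you mention (the Radon measure $\La f$ of an $H^1$ function charges no set of zero capacity, and bounded $H^1_0$ test functions have nonnegative quasi-continuous versions when they are a.e.\ nonnegative); this is standard in the Dirichlet-form framework and is also implicit in the paper's argument, so flagging it, as you do, is the right level of rigor here rather than a gap.
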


\begin{rem}
The assumption that singular part $\La^{\rm sing}f\gs 0$ is necessary. Let us consider a simple example: $f(t)=-|t|$ defined on $(-1,1)$. Then $f''(t)=-\delta(0)$, the  Dirac measure with center at $s=0$. By choosing $w=f$, then, at each the approximate continuity points of $f''$ and $f'\cdot w'=f'^2$, we have $f''+f'w'=f''+f'^2=1.$
\end{rem}

We will need also the following parabolic version (see \cite[Theorem 4.4]{zz-cvpde}):
\begin{thm}[\cite{zz-cvpde}]\label{max-p}
Let $\Omega$ be a bounded domain  and let $T>0$.
Let $f(x,t)\in H^{1}(\Omega_T)\cap L^\infty(\Omega_T)$ and suppose that $f$ achieves one of its strict maximum in $\Omega\times(0,T]$ in the sense that:  there exists a neighborhood $U\subset\subset \Omega$ and an interval $(\delta,T]\subset   (0,T]$ for some $\delta>0$ such that
$$\sup_{U\times(\delta,T]}f>\sup_{\Omega_T\backslash ( U\times(\delta,T])}f .$$
Assume that, for almost every $t\in(0,T)$,  $\La f(\cdot, t) $ is a signed Radon measure with $\La^{\rm sing}f(\cdot,t)\gs 0$.
Let  $w\in  H^{1}(\Omega_T)\cap L^\infty(\Omega_T)$  with $\partial_tw(x,t)\ls C$ for some constant $C>0$, for almost all $(x,t)\in\Omega_T$.  Then,  there exists a sequence of points $\{(x_j,t_j)\}_{j\in\mathbb N}\subset U\times(\delta,T]$  such that every $x_j$ is  an approximate continuity point of $\La^{\rm ac}f(\cdot, t_j)$ and $\ip{\nabla f}{\nabla w}(\cdot, t_j)$, and that
$$f(x_j,t_j)\gs \sup_{\Omega_T} f-1/j\quad{\rm and }\quad  \La^{\rm ac}f(x_j,t_j)+\ip{\nabla f}{\nabla w}(x_j,t_j)-\frac{\partial}{\partial t}f(x_j,t_j)\ls 1/j.$$
\end{thm}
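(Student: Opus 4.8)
The plan is to carry out, on the whole space-time cylinder $\Omega_T$, the variational selection that underlies the elliptic pointwise maximum principle of the previous theorem, treating $t$ on the same footing as the spatial variables. I would first record, via Fubini, the consequences of $f,w\in H^1(\Omega_T)\cap L^\infty(\Omega_T)$: for a.e.\ $t\in(0,T)$ the slices $f(\cdot,t),w(\cdot,t)$ lie in $H^1(\Omega)\cap L^\infty(\Omega)$, $\La f(\cdot,t)$ is a signed Radon measure with $\La^{\rm sing}f(\cdot,t)\gs0$ (this being a hypothesis of the theorem), and for a.e.\ $x$ the map $t\mapsto f(x,t)$ is absolutely continuous on $(0,T)$ with derivative $\partial_tf(x,\cdot)$. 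A second Fubini step gives, for a.e.\ $t$, a full-$\mu$-measure set of $x\in\Omega$ that are simultaneously approximate-continuity points of $\La^{\rm ac}f(\cdot,t)$, of $\ip{\nabla f}{\nabla w}(\cdot,t)$ and of $\partial_tf(\cdot,t)$; the sought points $(x_j,t_j)$ are to be found in this set.

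Writing $S:=\sup_{\Omega_T}f$, for each $j$ I would apply Ekeland's variational principle on $\Omega_T$ to produce a stable near-maximizer $(\bar x,\bar t)\in U\times(\delta,T]$ with $f(\bar x,\bar t)>S-\epsilon$ and a small Lipschitz slope bound, and then pass to $F:=f-\kappa\big(d^2(\cdot,\bar x)+(t-\bar t)^2\big)+(\text{small linear drift in }t)$, where the drift pushes the maximum of $F$ strictly into the time-interior (away from the artificial lower edge $t\to\delta^+$, where the wrong sign of $\partial_tF$ would occur). For $\kappa$ large relative to the Ekeland slope but $\epsilon$ chosen much smaller afterwards, $F$ has a strict space-time maximum confined to a small sub-cylinder of $U\times(\delta,T]$ on which $f$ is still within $1/j$ of $S$; moreover $\La^{\rm sing}F(\cdot,t)\gs0$ is preserved, since $\La^{\rm sing}(-d^2(\cdot,\bar x))\gs0$ by the Laplacian comparison for the distance function on $RCD$ spaces. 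On that sub-cylinder I would, on the one hand, run the mechanism of the elliptic theorem slice-wise to obtain approximate-continuity points at which $\La^{\rm ac}f+\ip{\nabla f}{\nabla w}$ is at most $1/j$ plus a controlled contribution from $\kappa$ and the drift; on the other hand, maximality of $F$ in the $t$-direction there (interior or at the right endpoint $t=T$, never at the lower edge) gives a lower bound $\partial_tf\gs-1/j-(\text{controlled contribution})$. The hypothesis $\partial_tw\ls C$ enters when one rewrites the spatial quantity $\La^{\rm ac}f+\ip{\nabla f}{\nabla w}$ as the Laplacian of $f$ for the \emph{time-dependent} weighted measure $e^{-w}\mu$ and tests the heat-type identity against a nonnegative space-time cutoff: the resulting extra term $\int\!\!\int(\partial_tw)(\cdots)e^{-w}\,d\mu\,dt$ with $(\cdots)\gs0$ is controlled precisely by $\partial_tw\ls C$, exactly as $\La^{\rm sing}f(\cdot,t)\gs0$ controls the singular-part term in the spatial direction. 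Finally, sending the auxiliary parameters to $0$ in the correct nested order as $j\to\infty$ and collecting the points produces the asserted sequence.

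The step I expect to be the main obstacle is coupling the spatial and temporal conclusions at one and the same point: the elliptic theorem is applied on a fixed time slice, while the good sign of $\partial_tf$ is obtained along a fixed spatial fibre, and --- since $S$ is an essential supremum that need not be attained and $\La^{\rm ac}f(\cdot,t),\partial_tf$ are only defined almost everywhere --- one cannot simply intersect the ``maximizing slice'' with the ``maximizing fibre''. Making this rigorous calls for a matched double approximation (the temporal perturbation scale chosen finer than the spatial one) together with a measure-theoretic argument showing that the set of $(x,t)$ near the maximum on which the combined inequality fails is $\mu\otimes dt$-null, hence avoidable. The two sign hypotheses $\La^{\rm sing}f(\cdot,t)\gs0$ and $\partial_tw\ls C$ are exactly what make this last step work, and the example $f(t)=-|t|$ in the preceding remark shows that dropping the first alone already destroys the conclusion.
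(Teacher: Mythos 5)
First, a contextual remark: this theorem is quoted from \cite[Theorem 4.4]{zz-cvpde} and the present paper does not reproduce its proof, so the comparison below is with the published argument in that reference rather than with anything in the paper at hand.

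Your outline is in a variational (Ekeland + penalty) framework, whereas the proof in \cite{zz-cvpde} is an integrated, proof-by-contradiction argument: one assumes that $\La^{\rm ac}f+\ip{\nabla f}{\nabla w}-\partial_t f\gs\alpha>0$ on an essential-supremum neighbourhood $A_\epsilon:=\{f>S-\epsilon\}$ (with $S:=\sup_{\Omega_T}f$ and $\epsilon$ smaller than the strict-maximum gap), tests the sliced distributional identity for $\La f(\cdot,t)$ against $(f-S+\epsilon)_+\,e^{w}$, and integrates in $t$. The spatial side gives
$\int_{\delta}^T\!\!\int (f-S+\epsilon)_+e^{w}\big[\La^{\rm ac}f+\ip{\nabla f}{\nabla w}\big]\,d\mu\,dt\ls 0$
by $\La^{\rm sing}f(\cdot,t)\gs0$; the time side rewrites $(f-S+\epsilon)_+\partial_tf=\tfrac12\partial_t(f-S+\epsilon)_+^2$ and integrates by parts in $t$, where the upper boundary term at $t=T$ is $\gs0$, the lower boundary term vanishes because the strict-maximum hypothesis forces $(f-S+\epsilon)_+(\cdot,\delta)=0$, and the residual $-\tfrac12\!\iint(f-S+\epsilon)_+^2e^{w}\partial_tw$ is controlled by $\partial_tw\ls C$ and $(f-S+\epsilon)_+\ls\epsilon$. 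This produces, for $\epsilon$ small, a strictly positive integral that is also $\ls 0$, a contradiction; the pointwise conclusion then follows by picking Lebesgue/approximate-continuity points of the relevant $L^1$-functions on the positive-measure set $A_{1/j}$.

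The genuine gap in your proposal is exactly the one you flag yourself: you obtain the spatial inequality by running the elliptic maximum principle \emph{slice-wise} at a near-maximizing time $\bar t$, and you would like ``maximality of $F$ in the $t$-direction'' at the same point $\bar x$ to give $\partial_t f\gs-1/j$; but the near-maximizer selected from the slice $F(\cdot,\bar t)$ is not a near-maximizer of the one-variable function $F(\bar x,\cdot)$, and for an $H^1\cap L^\infty$ function there is no reason a point realising both exists. The ``matched double approximation'' you propose is not a construction but a restatement of the difficulty, and Ekeland's principle is also ill-adapted here because $f$ is only defined $\mu\otimes dt$-a.e.\ and is not u.s.c., so the Ekeland ``slope'' is not the a.e.-defined $\partial_tf$ or $\La^{\rm ac}f$ you need. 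The missing idea is precisely to replace the pointwise coupling by the global space-time integral above, in which the two hypotheses $\La^{\rm sing}f(\cdot,t)\gs0$ and $\partial_tw\ls C$ control the spatial and temporal contributions separately inside the \emph{same} integral, after which the decoupling problem disappears. Your intuition that $\partial_tw\ls C$ enters through an $e^{w}$-weighted identity (your $e^{-w}$ is a sign slip) and that the lower time edge must be avoided is correct and is indeed where the strict-maximum hypothesis is used; what is missing is the recognition that everything must be done at the level of the integral rather than at a single point.
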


\section{The proof of Theorem \ref{thm1.1} and its consequences}

Given $K\in\mathbb R$ and $N\in [1,\infty)$. Let $(X,d,\mu)$  be  a metric measure space satisfying $RCD^*(K,N)$.  In this section, we will prove   Theorem \ref{thm1.1}  and its consequences.

The following is the main lemma in this paper.
\begin{lem}\label{lem3.1}
 Given $K\gs0$ and $N\in(1,\infty)$. Let $(X,d,\mu)$ be a metric measure space satisfying $RCD^*(-K,N)$.
Let $f(x,t)$ be a function on $B_{R,T}$ with all  $f, \partial_t f, |\nabla f|^2\in H^{1}(B_{R,T})\cap L^\infty(B_{R,T})$.
Assume that, for almost all $t\in(0,T)$, the function $f(\cdot,t)$ satisfies
\begin{equation}\label{eq3.1}
\mathscr Lf= \partial_tf-|\nabla f|^2\quad {\rm on}\ \ B_R
\end{equation}
in the sense of distributions. Let $\eta\in C^2(\mathbb R)$ with $\eta(f)>0$.
We  put
 $$F:=\eta(f)\cdot|\nabla f|^2.$$
 Then, we have that $F\in H^{1} (B_{R,T} )\cap L^{\infty}(B_{R,T} )$
and that, for almost all $t\in(0,T)$, the function $F(\cdot,t)$ satisfies that $\mathscr LF$ is a signed Radon measure on $B_{R/2}$ with  $\La^{\rm sing} F\gs0$ and
\begin{equation}\label{eq3.2}
\La^{\rm ac} F-\partial_tF+2\ip{\nabla f}{\nabla F} \gs \frac{\eta\cdot (\eta''+\eta')-2(\eta')^2}{\eta^3}\cdot F^2-2KF,
\end{equation}
$\mu{\rm-a.e.}$  on $ B_{R/2}$. Here and in the sequel, we denote by $\eta:=\eta(f),$  $\eta':=\eta'(f) $ and  $\eta'':=\eta''(f).$
\end{lem}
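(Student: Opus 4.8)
The plan is to follow the classical Bochner-technique blueprint, adapted to the distributional calculus of $RCD^*$-spaces. First I would record the regularity of $F=\eta(f)\cdot|\nabla f|^2$: since $f,\partial_tf,|\nabla f|^2\in H^1(B_{R,T})\cap L^\infty(B_{R,T})$ and $\eta\in C^2$, the chain rule and Leibniz rule (Lemma \ref{lem2.4}) together with the boundedness give $F\in H^1(B_{R,T})\cap L^\infty(B_{R,T})$; the relevant products stay in $H^1\cap L^\infty$ because each factor does. Then, working at a fixed time slice $t$ (for a.e.\ $t$), I would compute $\La F$ via the Leibniz rule, $\La F=\eta(f)\La(|\nabla f|^2)+|\nabla f|^2\La[\eta(f)]+2\ip{\nabla\eta(f)}{\nabla|\nabla f|^2}$, and then expand $\La[\eta(f)]=\eta'(f)\La f+\eta''(f)|\nabla f|^2$ using Lemma \ref{lem2.4}(i) and the hypothesis $\La f=\partial_tf-|\nabla f|^2$. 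The term $\eta(f)\La(|\nabla f|^2)$ is where the singular part enters; by Theorem \ref{thm2.6} applied to $f(\cdot,t)$ with $g=\partial_tf-|\nabla f|^2$, the singular part $\La^{\rm sing}(|\nabla f|^2)\gs0$, and since $\eta(f)>0$ this yields $\La^{\rm sing}F\gs0$; the absolutely continuous part is bounded below by the improved Bochner inequality \eqref{eq2.1} (on the set where $\nabla f\ne0$; on the complement $|\nabla f|^2=0$ locally and the inequality is trivial there after checking $F=0$).

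Next I would assemble the lower bound for $\La^{\rm ac}F$. Substituting the Bochner term, I get a bound involving $\frac{g^2}{N}$, $\ip{\nabla f}{\nabla g}$, $K|\nabla f|^2$ (with $K\to -K$ here), the improvement square term, plus the contributions $|\nabla f|^2(\eta'\La f+\eta''|\nabla f|^2)$ and $2\eta'\ip{\nabla f}{\nabla|\nabla f|^2}$. I would then bring in $\partial_tF=\eta'(f)\partial_tf\,|\nabla f|^2+\eta(f)\partial_t|\nabla f|^2$ and the drift term $2\ip{\nabla f}{\nabla F}=2\eta'|\nabla f|^2\ip{\nabla f}{\nabla f}+2\eta\ip{\nabla f}{\nabla|\nabla f|^2}$ — wait, more precisely $2\ip{\nabla f}{\nabla F}=2|\nabla f|^2\eta'\langle\nabla f,\nabla f\rangle+2\eta\langle\nabla f,\nabla|\nabla f|^2\rangle$, i.e.\ $2\eta'|\nabla f|^4+2\eta\ip{\nabla f}{\nabla|\nabla f|^2}$. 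The point of adding $2\ip{\nabla f}{\nabla F}$ and subtracting $\partial_tF$ is that the $\ip{\nabla f}{\nabla g}$ term coming from Bochner, the $\partial_t|\nabla f|^2$ term, and the cross gradient terms should cancel or combine into complete squares. Concretely, $\ip{\nabla f}{\nabla g}=\ip{\nabla f}{\nabla\partial_tf}-\ip{\nabla f}{\nabla|\nabla f|^2}$, and $\ip{\nabla f}{\nabla\partial_tf}$ pairs against the time derivative piece. The arithmetic is bookkeeping-heavy but should collapse to an expression of the form $\frac{\eta(\eta''+\eta')-2(\eta')^2}{\eta^3}F^2$ after I rewrite everything homogeneously in $F=\eta|\nabla f|^2$ and use the improvement square $\frac{N}{N-1}\big(\frac{\ip{\nabla f}{\nabla|\nabla f|^2}}{2|\nabla f|^2}-\frac{g}{N}\big)^2$ to absorb any residual non-square terms.

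The main obstacle I anticipate is precisely the algebraic reorganization in the last step: matching the leftover terms (after all cancellations) against the Bochner improvement square and the $\frac{g^2}{N}$ term so that only the stated coefficient $\frac{\eta(\eta''+\eta')-2(\eta')^2}{\eta^3}$ survives, with the $-2KF$ coming cleanly from $2K|\nabla f|^2\cdot\eta/\eta$. In the smooth case one would use the Hessian inequality \eqref{eq1.5} to handle $|\nabla f_{ij}|^2$-type terms; here, as Remark \ref{rem2.7} signals, the substitute is exactly the improvement term in \eqref{eq2.1}, so the delicate point is verifying that $\frac{N}{N-1}\big(\frac{\langle\nabla f,\nabla|\nabla f|^2\rangle}{2|\nabla f|^2}-\frac{g}{N}\big)^2+\frac{g^2}{N}$ dominates the combination of $\ip{\nabla f}{\nabla|\nabla f|^2}$, $|\nabla f|^4$ and $g\cdot|\nabla f|^2$ terms that remain, uniformly in the relevant range. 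I would also need to handle the measure-theoretic subtlety that identities valid $\mu$-a.e.\ at approximate continuity points must be combined consistently across the time variable, but since we only claim \eqref{eq3.2} $\mu$-a.e.\ for a.e.\ $t$, Fubini-type arguments on $B_{R,T}$ suffice. One final care point: the claim that $\La^{\rm sing}F\gs0$ requires that multiplying the signed measure $\La(|\nabla f|^2)$ by the continuous positive function $\eta(f)$ and adding the absolutely continuous corrections preserves nonnegativity of the singular part, which follows since $\eta(f)>0$ and the other terms are in $L^1_{\rm loc}$, hence absolutely continuous.
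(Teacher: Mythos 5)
Your plan is essentially the paper's: compute $\La F$ by chain/Leibniz rule, invoke the improved Bochner inequality of Theorem~\ref{thm2.6} in place of the matrix inequality~\eqref{eq1.5}, split into $\{|\nabla f|\neq 0\}$ and $\{|\nabla f|=0\}$, and observe $\La^{\rm sing}F\gs0$ since $\eta(f)>0$, $\La^{\rm sing}f=0$ and $\La^{\rm sing}(|\nabla f|^2)\gs0$. The genuine gap is exactly the step you flag as the ``main obstacle'': you assert that the leftover terms collapse to the coefficient $\frac{\eta(\eta''+\eta')-2(\eta')^2}{\eta^3}$, but you do not show it, and that computation is the content of the lemma.

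The paper closes it with two completing-the-square moves. Set $A:=|\nabla f|^2$ and $B:=\ip{\nabla f}{\nabla A}/A$ on $\{|\nabla f|\neq0\}$. After writing $\ip{\nabla f}{\nabla g}-\ip{\nabla f}{\nabla\partial_t f}=-\ip{\nabla f}{\nabla A}$ and $-2\eta\ip{\nabla f}{\nabla A}=-2\ip{\nabla f}{\nabla F}+2\eta'A^2$ (using $F=\eta A$), the Bochner contributions combine via the identity
\[
\frac{g^2}{N}+\frac{N}{N-1}\Big(\frac{B}{2}-\frac{g}{N}\Big)^2=\frac{g^2-Bg+\frac{N}{4}B^2}{N-1}\;\gs\;\frac{B^2}{4},
\]
leaving $(-\eta'+\eta'')A^2+2\eta'AB+2\eta'A^2+\frac{\eta B^2}{2}-2\eta KA-2\ip{\nabla f}{\nabla F}$. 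The second move is
\[
\frac{\eta B^2}{2}+2\eta'AB\;\gs\;-\frac{2(\eta')^2}{\eta}A^2,
\]
which gives $\big(\eta'+\eta''-\tfrac{2(\eta')^2}{\eta}\big)A^2-2KF-2\ip{\nabla f}{\nabla F}$, i.e.\ exactly the claimed coefficient after writing $A=F/\eta$. Finally, on $\{|\nabla f|=0\}$ the inequality is not ``trivial because $F=0$'': you still need $\La^{\rm ac}F-\partial_tF\gs0$ there, which the paper derives from the non-improved Bochner bound together with the Cauchy--Schwarz observation that $\ip{\nabla f}{\nabla A}=0=\ip{\nabla f}{\nabla F}$ $\mu$-a.e.\ on that set.
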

\begin{proof}
From  the assumption $f,|\nabla f|^2\in H^{1}(B_{R,T})\cap L^\infty(B_{R,T})$, we know that
 $F\in H^{1} (B_{R,T} )\cap L^{\infty}(B_{R,T} )$. Set
 $$g(x,t):=\partial_tf-|\nabla f|^2 \in H^{1}(B_{R,T})\cap L^\infty(B_{R,T}).$$
The Fubini Theorem implies that, for almost all $t\in(0,T)$, the function $g(\cdot,t) \in H^{1}(B_{R})\cap L^\infty(B_{R}).$  Therefore, by using Theorem \ref{thm2.6} to  $RCD^*(-K,N)$-spaces, we
get that the distribution $\La(|\nabla f|^2)$ is a signed Radon measure on  $B_{R/2}$, which satisfies that   $\La^{\rm sing}(|\nabla f|^2)\gs0$ and, for $\mu$-a.e. $x\in B_{R/2},$ that
\begin{equation}\label{eq3.3}
\La^{\rm ac}(|\nabla  f |^2 ) \gs   \frac{2g^2}{N}+2\ip{\nabla f}{\nabla  g}-2K|\nabla f|^2,
 \end{equation}
and moreover,  for  $\mu$-a.e. $  x\in B_{R/2}\cap \big\{y:\ |\nabla f(y)|\not=0\big\}$, we have
\begin{equation}\label{eq3.4}
  \La^{\rm ac}(|\nabla f|^2)\gs  \frac{2 g^2}{N}+2 \ip{\nabla f}{\nabla g}-2K|\nabla f|^2+\frac{2N}{N-1}\cdot\Big(\frac{\ip{ \nabla f}{\nabla |\nabla f|^2}}{2|\nabla f|^2}-\frac{ g}{N}\Big)^2.
\end{equation}
From Lemma \ref{lem2.4}, we get, for almost all $t\in (0,T)$, that the function $F(\cdot,t)$ is a signed Radon measure on $B_{R/2}$ with
\begin{equation*}
\begin{split}
 \mathscr LF&=|\nabla f|^2\cdot \mathscr L(\eta )+\eta \cdot \mathscr L(|\nabla f|^2)+2\ip{\nabla |\nabla f|^2}{\nabla \eta }\cdot\mu\\
 &=\eta' \cdot   |\nabla f|^2\cdot \mathscr L(f)+\eta'' \cdot |\nabla f|^4\cdot\mu+\eta \cdot\mathscr L(|\nabla f|^2)+2\eta' \ip{\nabla |\nabla f|^2}{\nabla f}\cdot\mu
 \end{split}\end{equation*}
in the sense of measures.   This implies that
\begin{equation*}
 \mathscr L^{\rm sing}F=\eta' \cdot   |\nabla f|^2\cdot \mathscr L^{\rm sing}f+ \eta \cdot\mathscr L^{\rm sing}(|\nabla f|^2) \gs0,
\end{equation*}
since $ \eta(f)>0$ and $\mathscr L^{\rm sing}f=0$, and that
\begin{equation}\label{eq3.5}
\begin{split}
 \mathscr L^{\rm ac}F=\eta' \cdot   |\nabla f|^2\cdot g+\eta'' \cdot |\nabla f|^4 +\eta \cdot\mathscr L^{\rm ac}(|\nabla f|^2)+2\eta'\cdot \ip{\nabla |\nabla f|^2}{\nabla f}
 \end{split}
 \end{equation}
for almost all $x\in B_{R/2}$. From $F\in  H^{1}(B_{R,T})$, we get
\begin{equation}\label{eq3.6}
  \partial_tF=\eta' \cdot\partial_tf\cdot   |\nabla f|^2 +2\eta  \cdot   \ip{\nabla\partial_tf}{\nabla f}
 \end{equation}
for almost all $x\in B_{R}$.

\noindent\emph{Case 1}: Let us consider points in $B_{R/2}\cap \big\{y:\ |\nabla f(y)|\not=0\big\}$.
 Denote by, at points where $|\nabla f|\not=0$,
$$A:=|\nabla f|^2\quad {\rm and}\quad B:=\frac{\ip{\nabla f}{\nabla |\nabla f|^2}}{|\nabla f|^2}.$$
By combining  (\ref{eq3.4}),  (\ref{eq3.5}),  (\ref{eq3.6}) and $g=\partial_tf-A$, we  concludes that
\begin{equation*}
\begin{split}
 \mathscr L^{\rm ac}F-\partial_tF&=\eta' \cdot   A\cdot(g-\partial_tf)+\eta'' \cdot A^2 +\eta \cdot\mathscr L^{\rm ac}(|\nabla f|^2)+2\eta'\cdot AB-2\eta\ip{\nabla f}{\nabla(g+A)}\\
 &\gs (-\eta'+\eta'')\cdot A^2+2\eta'\cdot AB-2\eta\ip{\nabla f}{\nabla(g+A)}\\
 &\quad +2\eta\Big(\frac{g^2}{N}+\ip{\nabla f}{\nabla g}-KA+\frac{N}{N-1}\big(\frac{B}{2}-\frac{g}{N}\big)^2\Big)\\
 &= (-\eta'+\eta'')\cdot A^2+2\eta'\cdot AB-2\eta\ip{\nabla f}{\nabla (F/\eta) }\\
 &\quad +2\eta\Big(\frac{g^2}{N-1} -KA+ \frac{N\cdot B^2}{4(N-1)}-\frac{Bg}{N-1} \Big)\\
 &\gs (-\eta'+\eta'')\cdot A^2+2 \eta' \cdot AB-2 \ip{\nabla f}{\nabla F}+2\eta'\cdot A^2 -2\eta KA
 +  \frac{\eta\cdot B^2}{2} \\
 \end{split}
 \end{equation*}
 for  almost all $  x\in B_{R/2}\cap \big\{y:\ |\nabla f(y)|\not=0\big\}$, where we have used
 $$F=\eta A\quad {\rm and}\quad g^2-Bg+\frac{N\cdot B^2}{4}\gs \frac{(N-1)\cdot B^2}{4}.$$
Noticing that $\eta>0$ and
$$\frac{\eta\cdot B^2}{2}+ 2 \eta' \cdot AB\gs -\frac{2(\eta' )^2}{\eta}\cdot A^2,$$
 we have,    $\mu$-a.e. on  $    B_{R/2}\cap \big\{y:\ |\nabla f(y)|\not=0\big\}$,  that (since  $F= \eta A$,)
 \begin{equation}\label{eq3.7}
\begin{split}
 \mathscr L^{\rm ac}F-\partial_tF& \gs (\eta'+\eta'')\cdot A^2
 -\frac{2(\eta')^2}{\eta}\cdot A^2-2\ip{\nabla f}{\nabla F} -2\eta KA\\
 &=\frac{\eta\cdot (\eta''+\eta')-2(\eta')^2}{\eta^3}\cdot F^2-2\ip{\nabla f}{\nabla F}-2KF.
 \end{split}
 \end{equation}

\noindent \emph{Case 2}: Let us consider points in $B_{R/2}\cap \big\{y:\ |\nabla f(y)|=0\big\}$.  At point where $A=|\nabla f|^2=0$, the combination  of (\ref{eq3.3}),  (\ref{eq3.5}) and  (\ref{eq3.6}) implies that
\begin{equation*}
\begin{split}
 \mathscr L^{\rm ac}F-\partial_tF&= \eta \cdot\mathscr L^{\rm ac}(|\nabla f|^2)+2\eta'\cdot \ip{\nabla |\nabla f|^2}{\nabla f}-2\eta\ip{\nabla f}{\nabla(g+A)}\\
 &\gs2\eta'\cdot \ip{\nabla A}{\nabla f}-2\eta\ip{\nabla f}{\nabla(g+A)} +2\eta\Big(\frac{g^2}{N}+\ip{\nabla f}{\nabla g}-KA\Big)\\
 &=2(\eta'-\eta)\ip{\nabla f}{\nabla A } +\frac{2\eta \cdot g^2}{N}\gs2(\eta'-\eta)\ip{\nabla f}{\nabla A}.
 \end{split}
 \end{equation*}
Noticing that, $\mu-{\rm a.e.}$ on $\big\{y:\ |\nabla f|(y)=0\big\}$, and that $ |\ip{\nabla f}{\nabla A}|\ls |\nabla f|\cdot|\nabla A|$, we conclude that $|\ip{\nabla f}{\nabla A}|=0$ for almost all $x\in \big\{y:\ |\nabla f|(y)=0\big\}$. The same holds for  $|\ip{\nabla f}{\nabla F}|$.
\begin{equation}\label{eq3.8}
\begin{split}
 \mathscr L^{\rm ac}F-\partial_tF \gs0=-2\ip{\nabla f}{\nabla F}, \quad \mu{\rm -a.e.}\ \ {\rm on}\ \ B_{R/2}\cap \big\{y:\ |\nabla f(y)|=0\big\}.
 \end{split}
 \end{equation}

The combination of (\ref{eq3.7}) in Case 1 and (\ref{eq3.8}) in Case 2 gives the result (\ref{eq3.2}).
\end{proof}

We firstly show the main result under an added assumption that  $u(x,t)$ satisfies
\begin{equation}\label{eq3.9}
u\in H^{1}(B_{R,T})\cap L^\infty(B_{R,T})\qquad {\rm and}\qquad \partial_t u \in H^{1}(B_{R,T})\cap L^\infty(B_{R,T}).
\end{equation}
\begin{lem}\label{lem3.2}
Let $K\gs 0$ and $N\in(1,\infty)$, and let $(X,d,\mu)$ be a metric measure space satisfying  $RCD^*(-K,N)$.
Let $T\in(0,\infty)$ and let  $B_{R}$ be a geodesic ball of radius $ R$, $B_{R,T}=B_{R}\times (0,T)$, and let  $u(x,t)\in H^{1}\big(B_{R,T}\big)\cap L^\infty(B_{R,T})$ be a positive locally weak solution of the heat equation on $B_{R,T}$. Suppose that there exists $\delta,\delta'\in(0,1/2)$ such that $\delta\ls u\ls 1-\delta'$. Suppose also $\partial_t u \in H^1(B_{R,T})\cap L^\infty(B_{R,T})$. Then,   we have the following local gradient estimate
\begin{equation}\label{eq3.10}
 \sup_{B_{R/2}\times{(T/2,T)}} \frac{|\nabla f|^2 }{-f}(x,t) \ls C_N \cdot \Big( \frac{\ln(1/\delta)}{R^2}+\frac{1}{T}+K\Big),
\end{equation}
where $f=\ln u$.
\end{lem}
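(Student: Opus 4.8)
The plan is to apply Lemma \ref{lem3.1} with the special choice $\eta(s)=-1/s$ and then to localize by a maximum-principle argument based on Theorem \ref{max-p}. First I would set $f=\ln u$; since $\delta\ls u\ls1-\delta'$ forces $f$ to take values in the compact interval $[\ln\delta,\ln(1-\delta')]\subset(-\infty,0)$, I fix $\eta\in C^2(\mathbb R)$ that equals $-1/s$ on a neighbourhood of this interval, so that $\eta(f)=-1/f>0$ and
\begin{equation*}
F:=\eta(f)\,|\nabla f|^2=\frac{|\nabla f|^2}{-f}\gs0 ,
\end{equation*}
with $F\in L^\infty$ because $|\nabla f|^2\in L^\infty$ and $-f$ is bounded away from $0$. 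The hypotheses of Lemma \ref{lem3.1} are met: $u,\partial_t u\in H^{1}\cap L^\infty$, so Theorem \ref{thm2.6} applied to $u$ itself (with $g=\partial_t u$) upgrades $|\nabla u|^2$ — and hence $f$, $\partial_t f=\partial_t u/u$ and $|\nabla f|^2=|\nabla u|^2/u^2$ — to $H^{1}\cap L^\infty$ on an interior cylinder, while $\La f=\partial_t f-|\nabla f|^2$ comes from $\La u=\partial_t u$ and the chain rule (Lemma \ref{lem2.4}). The point of the choice $\eta(s)=-1/s$ is the elementary identity $\tfrac{\eta(\eta''+\eta')-2(\eta')^2}{\eta^3}\equiv1$, so that Lemma \ref{lem3.1} gives, for a.e.\ $t$, that $\La F$ is a signed Radon measure with $\La^{\rm sing}F\gs0$ and
\begin{equation*}
\La^{\rm ac}F-\partial_tF+2\ip{\nabla f}{\nabla F}\gs F^2-2KF\qquad\mu\text{-a.e.}
\end{equation*}
This is precisely where the improved Bochner inequality \eqref{eq2.1}, built into Lemma \ref{lem3.1}, substitutes for the missing algebraic inequality \eqref{eq1.5}: without its last term the coefficient of $F^2$ above would be $<1$ and the rest of the argument would not close.

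Next I would localize. Pick a good cut-off function $\varphi$ (as in \cite{ams16,mn14,hkx13}), $0\ls\varphi\ls1$, equal to $1$ on $B_{R/2}$ and supported on a ball where the last displayed inequality is available, with $|\nabla\varphi|\ls C_N/R$, $\La\varphi\in L^\infty$ and $\La\varphi\gs-C_N(R^{-2}+K)$ (the last bound via the Laplacian comparison on $RCD^*(-K,N)$) — up to the usual harmless adjustment of geometric constants, which I do not track; also pick $\chi\in C^\infty([0,T])$ with $\chi(0)=0$, $0\ls\chi\ls1$, $\chi\equiv1$ on $[T/2,T]$, $0\ls\chi'\ls C/T$. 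Set $\psi:=\varphi^2\chi$ and $G:=\psi F\in H^{1}\cap L^\infty$; then $G\gs0$, $\La^{\rm sing}G=\psi\,\La^{\rm sing}F\gs0$, and $G$ vanishes near the lateral boundary and at $t=0$, so if $\sup G>0$ then $G$ attains a strict interior maximum in the sense of Theorem \ref{max-p}. For $\epsilon>0$ I would apply Theorem \ref{max-p} to $G$ with the auxiliary function
\begin{equation*}
w:=2f-2\ln(\psi+\epsilon)\in H^{1}\cap L^\infty,\qquad \partial_tw=2\tfrac{\partial_t u}{u}-\tfrac{2\varphi^2\chi'}{\psi+\epsilon}\ls\tfrac{2\|\partial_t u\|_\infty}{\delta} ,
\end{equation*}
so that $\partial_tw$ is bounded above; this produces a sequence $(x_j,t_j)$, which may be taken to be approximate-continuity points of all the $L^1$-quantities involved (in particular of $\La^{\rm ac}F(\cdot,t_j)$, $\ip{\nabla\psi}{\nabla F}(\cdot,t_j)$ and $\ip{\nabla\psi}{\nabla f}(\cdot,t_j)$), with $G(x_j,t_j)\to\sup G$ and $\La^{\rm ac}G+\ip{\nabla G}{\nabla w}-\partial_tG\ls1/j$ at $(x_j,t_j)$.

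Then I would expand $\La^{\rm ac}G$, $\ip{\nabla G}{\nabla w}$, $\partial_tG$ by the Leibniz rule: the term $2f$ in $w$ makes the combination $\psi(\La^{\rm ac}F-\partial_tF+2\ip{\nabla F}{\nabla f})$ appear, which is $\gs\psi(F^2-2KF)$ by the inequality of the first paragraph; the term $-2\ln(\psi+\epsilon)$ in $w$ is chosen so that, after substituting $\psi\nabla F=\nabla G-F\nabla\psi$, the cross term $2\ip{\nabla\psi}{\nabla F}$ coming from the Leibniz rule is cancelled modulo an error of order $\epsilon$ (harmless since $\psi(x_j,t_j)$ stays bounded away from $0$ along the sequence). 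Multiplying through by $\psi$ and using $\psi\ls1$, $G=\psi F$, the cut-off bounds, $-f\ls\ln(1/\delta)$, and $|\nabla f|^2=F\,(-f)\ls F\ln(1/\delta)$, all "bad" terms reduce to $C_N(R^{-2}+T^{-1}+K)\,G+C_NR^{-1}\sqrt{\ln(1/\delta)}\,G^{3/2}$ (up to errors tending to $0$); a Young inequality applied to the $G^{3/2}$-term absorbs half of $G^2$ and produces a term $C_N\ln^2(1/\delta)/R^4$, so that after letting $j\to\infty$ and then $\epsilon\to0$,
\begin{equation*}
\tfrac12(\sup G)^2\ls C_N\Big(\frac1{R^2}+\frac1T+K\Big)\sup G+C_N\frac{\ln^2(1/\delta)}{R^4},
\end{equation*}
whence $\sup G\ls C_N(R^{-2}+T^{-1}+K)+C_N\ln(1/\delta)R^{-2}\ls C_N(\ln(1/\delta)R^{-2}+T^{-1}+K)$, using $\ln(1/\delta)>\ln2$ to absorb the bare $R^{-2}$. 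Since $F=G$ on $\{\psi\equiv1\}\supseteq B_{R/2}\times(T/2,T)$, this is exactly \eqref{eq3.10}.

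The hard part will be the localization step: on a genuinely singular $RCD^*$-space there is no pointwise maximum principle, so the differential inequality for $F$ cannot simply be evaluated "at the maximum of $\psi F$". One must instead work through the approximate maxima furnished by Theorem \ref{max-p}, engineer $w$ so as to cancel the cut-off cross terms, and then carefully justify (i) that the approximate-maximum points may be chosen to be approximate-continuity points of every $L^1$-function entering the computation, and (ii) the two limits $j\to\infty$ and $\epsilon\to0$ in the regularisation $-2\ln(\psi+\epsilon)$. These are precisely the technical points treated in \cite{zz-cvpde}, and I would follow that treatment.
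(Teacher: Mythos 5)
Your proposal follows essentially the same route as the paper: apply Lemma \ref{lem3.1} with $\eta(s)=-1/s$ (exploiting $\eta(\eta''+\eta')-2(\eta')^2=\eta^3$), then localize $F=|\nabla f|^2/(-f)$ by a parabolic cutoff and close the argument at an approximate maximum via Theorem \ref{max-p} with $w=2f-2\ln(\text{cutoff})$. The only (minor, cosmetic) technical differences are that the paper uses a cutoff $\psi$ bounded below by a small fixed $\epsilon>0$ so that $\ln\psi\in L^\infty$ directly (avoiding your extra $\ln(\psi+\epsilon)$ regularisation and the double limit $j\to\infty$, $\epsilon\to0$), and that the paper absorbs the cross term $2|\nabla f||\nabla\psi|F/\psi$ by the weighted inequality $2|\nabla f||\nabla\psi|\le \tfrac12 G+2|\nabla\psi|^2/(\psi\eta)$, giving a linear bound in $G$ rather than your $G^{3/2}$ term followed by Young; both close the estimate.
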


\begin{proof} From the assumption of $u$, we know $f,\partial_tf=\partial_t u/u\in H^1(B_{R,T})\cap L^\infty(B_{R,T})$. It was proved in \cite[Lemma 5.5]{zz-cvpde} that $|\nabla u|^2\in H^1(B_{3R/4,T})\cap L^\infty(B_{3R/4,T})$. Hence, by  $1-\delta'\gs u\gs \delta>0$, we have
$|\nabla f|^2= |\nabla u|^2/u^2 \in H^1(B_{3R/4,T})\cap L^\infty(B_{3R/4,T}).$ By Fubini Theorem, it is clear that, for almost all $t\in(0,T)$, the function $u(\cdot, t)$ satisfies $\mathscr Lu=\partial_t u$ on $B_{R}$ in the sense of distributions. From Lemma \ref{lem2.4}, for almost all $t\in(0,T)$, the function $f(\cdot,t)$ satisfies
\eqref{eq3.1}. Therefore, $f$ satisfies all of assumptions in Lemma \ref{lem3.1}.

Let $\eta(s)=-1/s$ and consider $F(x,t):=\eta(f)\cdot |\nabla f|^2$ on $B_{3R/4,T}$. Since $\ln\delta\ls f\ls \ln(1-\delta')<0$, we have $\eta(f)\gs \frac{1}{\ln(1/\delta)}>0$.
According to Lemma \ref{lem3.1} and
$$\eta(\eta''+\eta')-2(\eta')^2=\eta^3,$$
 we conclude that  $F\in H^{1} (B_{3R/4,T} )\cap L^{\infty}(B_{3R/4,T} )$
and that, for almost all $t\in(0,T)$, the function $F(\cdot,t)$ satisfies that $\mathscr LF$ is a signed Radon measure on $B_{5R/8}$ with  $\La^{\rm sing} F\gs0$ and
\begin{equation}\label{eq3.11}
\La^{\rm ac} F-\partial_tF+2\ip{\nabla f}{\nabla F} \gs   F^2-2KF,\quad \mu{\rm -a.e.}\ \ {\rm on}\ \ B_{5R/8}.
\end{equation}

We can assume that $ \sup_{B_{R/2}\times{(T/2,T)}}F>0$, otherwise, we are done. Fix a constant $\epsilon>0$ to be sufficiently small, (for example, we can choose $\epsilon:=\min\{1/10,\sup_{B_{R/2}\times{(T/2,T)}}F/10\}).$ We now choose a modified cutoff in the spatial direction $\phi(x)=\phi(r(x))$ to be a function of the distance $r$ to the center of  $B_R$ with the following properties that
$$  \epsilon\ls \phi \ls1\ \ {\rm on}\ \ B_{R},\quad \phi =1\ \ {\rm on}\ \ B_{R/2},\quad \phi = \epsilon\ \ {\rm on}\ \ B_{R}\backslash B_{9R/16},$$
and
$$ - \frac{C}{R}\phi ^{\frac 1 2}\ls  \phi '(r)  \ls 0\quad {\rm and}\quad  |\phi ''(r)|   \ls \frac{C}{R^2} \qquad \forall\ r\in(0,R)$$
for some  universal constant $C$ (which is independent of  $N,K,R$). Then, according to the Laplacian comparison theorem  \cite[Corollary 5.15]{gig15} on $RCD^*(-K,N)$ spaces, we have that
\begin{equation}
\frac{|\nabla \phi |^2}{\phi }\ls \frac{C_1}{R^2}\qquad {\rm and}\qquad \La\phi \gs-C_2(\frac{\sqrt K}{ R}+\frac 1{R^2})
\end{equation}
on $B_{R}$. (A detailed calculation for this can be found on Page 22 in \cite{zz-cvpde}.) Here and in the sequel of this proof, we denote $C_{1},C_2,C_3,\cdots$  the various constants which  depend only on  $N$.  Hence, the distribution $\La \phi$ is a signed Radon measure with $(\La \phi)^{\rm ac}\gs -C_2(\sqrt K/R+1/R^2)$ a.e. $x\in B_{R},$ and  $(\La \phi)^{\rm sing}\gs0.$ Let $\xi(t)$ be a function on $(0,T)$ such that
\begin{equation}\label{eq3.13}
\xi(t)=\epsilon \ \ {\rm on} \ \ (0,T/4),\quad  \xi(t)=1 \ \ {\rm on}\ \ (T/2,T) \quad\ {\rm and}\quad   0\ls \xi'(t)\ls \frac{C_3}{T} \ \ {\rm on}\ \ (0,T).
\end{equation}
Let $\psi(x,t):=\phi(x)\xi(t)$ defined on $B_{R,T}$.

Put $G(x,t):= \psi F$. Then we have $G\in    H^{1} (B_{3R/4,T} )\cap L^{\infty}(B_{3R/4,T} )$.  According to Lemma   \ref{lem2.4}, we have, for almost every $t\in(0,T)$,   that the function $G(\cdot, t)$ satisfies that
$$\La G=F\La \psi+\psi\La F+2\ip{\nabla \psi}{\nabla F}$$
in the sense of distributions.  Fix arbitrarily a such $t\in(0,T)$. Then $\La G$ is a signed Radon measure on $B_{5R/8}$ with
\begin{equation}\label{eq3.14}
(\La G)^{\rm sing}=F(\La \psi)^{\rm sing}+\psi(\La F)^{\rm sing}\gs0,
\end{equation}
since $F\gs0$, and that
$(\La G)^{\rm ac}=F(\La \psi)^{\rm ac}+\psi(\La F)^{\rm ac}+2\ip{\nabla \psi}{\nabla F}$ a.e. $x\in B_{5R/8}$. We have, for almost all $x\in B_{5R/8}$,
\begin{equation} \label{eq3.15}
\begin{split}
(\La G)^{\rm ac}-\partial_t G  +2\ip{\nabla f}{\nabla G} =&  \psi\Big((\La F)^{\rm ac} -\partial_t F  +2\ip{\nabla f}{\nabla F}\Big)\\
&\ +  F(\La \psi)^{\rm ac}+2\ip{\nabla \psi}{\nabla F}  +2\ip{\nabla f}{\nabla\psi}F-F\cdot\partial_t\psi.
\end{split}
\end{equation}
By (\ref{eq3.11})-(\ref{eq3.13}) and $G= \psi F$,  we have, for almost all $x\in  B_{5R/8}$,   that
  \begin{equation}\label{eq3.16}
\begin{split}
{\rm RHS \ of}\  (\ref{eq3.15})  \gs &  \psi\Big[F^2-2KF\Big]  +G\frac{(\La \psi)^{\rm ac}}{\psi}+2\ip{\nabla \psi}{\nabla (G/\psi)}  +2\ip{\nabla f}{\nabla\psi}\frac{G}{\psi}- \frac{G}{\psi}\cdot\partial_t\psi \\
\gs\ & \frac{G^2}{\psi}-2KG +\frac{G}{\psi}\Big[-C_2\big(\frac{\sqrt K}{R}+\frac{1}{R^2}\big)-\frac{2C_1}{R^2}-\frac{C_3}{T}\Big] \\
&\ +2 \ip{\nabla \psi}{\nabla  G}/\psi  -2|\nabla f|\cdot|\nabla\psi|\frac{G}{\psi}.
\end{split}
\end{equation}
Notice that $G=\psi\eta\cdot|\nabla f|^2$, $\eta(f)\gs\frac{1}{-\ln\delta}$,  and that
$$2|\nabla f|\cdot|\nabla\psi|=2\sqrt{\psi\eta}|\nabla f|\cdot\frac{|\nabla\psi|}{\sqrt{\psi\eta}}\ls \frac{G}{2}+2\frac{|\nabla\psi|^2}{\psi\eta}\ls \frac{G}{2}+\frac{2 C_1}{R^2}\cdot\ln(1/\delta).$$
Therefore, substituted this in (\ref{eq3.16}), we have, for almost all $x\in  B_{5R/8}$,
  \begin{equation}\label{eq3.17}
\begin{split}
{\rm RHS \ of}\  (\ref{eq3.15})
\gs\ & \frac{G^2}{2\psi}-2KG +\frac{G}{\psi}\Big[-C_2\big(\frac{\sqrt K}{R}+\frac{1}{R^2}\big)-\frac{2C_1}{R^2}-\frac{C_3}{T}-\frac{2 C_1}{R^2}\cdot\ln(1/\delta)\Big] \\
&\ +2 \ip{\nabla \psi}{\nabla  G}/\psi.
\end{split}
\end{equation}
By combining \eqref{eq3.17} with (\ref{eq3.15}) and $K\gs0$, $0<\psi\ls1$, we get, for almost all $x\in  B_{5R/8}$,
  \begin{equation}\label{eq3.18}
\begin{split}
(\La G)^{\rm ac}-\partial_t G  +2\ip{\nabla f}{\nabla G}-2 \ip{\nabla \ln\psi}{\nabla  G}
\gs\ & \frac{G^2}{2\psi}- \frac{G}{\psi}\Big[2K+A(K,R,T,\delta)\Big]
 \end{split}
\end{equation}
where
$$A(K,R,T,\delta):= C_4\Big(\frac{\sqrt K}{R}+\frac{ 1+\ln(1/\delta)}{R^2} +\frac{1}{T} \Big), \qquad {\rm and}\qquad C_4:=\max\{C_2+2C_1,C_3\}.$$
It is clear that $G$ achieves its strict maximum in $B_{5R/8}\times(T/4,T]$ in the sense of Theorem \ref{max-p}. Notice that $\partial_tf\in L^\infty(B_{R,T})$ and (\ref{eq3.14}), we can use Theorem \ref{max-p} to $G$, with choosing $w:=2f-2\ln\psi\in H^1(B_{5R/8,T})\cap L^\infty(B_{5R/8,T})$, and combining with \eqref{eq3.18}, to conclude that there exist a sequence $(x_j,t_j)_{j\in\mathbb N}$ such that, for each $j\in \mathbb N$,
$$G(x_j,t_j)\gs \sup_{B_{5R/8,T}}G-1/j$$
 and that
$$ \frac{G^2}{2\psi}(x_j,t_j)- \frac{G}{\psi}(x_j,t_j)\Big[2K+A(K,R,T,\delta)\Big](x_j,t_j)\ls 1/j. $$
Letting $j\to \infty$ and noticing that $\psi\gs \epsilon$, we have (recalling $\delta<1/2$, so $1+\ln(1/\delta)\ls 3\ln(1/\delta)$)
$$\sup_{B_{R/2}\times(T/2,T)}F\ls \sup_{B_{5R/8,T}}G\ls 4K+2A(K,R,T,\delta)\ls C_5\cdot\Big(K+\frac{ \ln(1/\delta)}{R^2}+\frac 1 T\Big),$$
where $C_5:=4+6C_4.$ The proof is finished.
\end{proof}

To drop the assumption (\ref{eq3.9}), let us recall an approximation via the Steklov's average.
 \begin{defn}\label{steklov}
 Given $B_R\subset X$ and   $u(x,t)\in L^1(B_{R,T})$, where $B_{R,T}:=B_R\times(0,T)$,  the \emph{Steklov average} of $u$ is defined as follows. For every $\varepsilon\in(0,T)$ and any $h\in  (0,\varepsilon)$,
\begin{equation*}
u_h(x,t):=
\frac{1}{h}\int_0^{h}u(x,t+\tau)d\tau,\quad t\in(0,T-\varepsilon].
\end{equation*}
\end{defn}
By using the standard theory of $L^p$ spaces, it is well know that
 if $u \in  L^p(B_{R,T})$,  then the Steklov average $u_h\to u$ in $L^p(B_{R,T-\varepsilon})$  as $h\to 0$, for  every $\varepsilon\in(0,T).$
 We summarize some necessary  propositions  of the Steklov average as follows, from \cite[Lemma 5.3, Lemma 5.4]{zz-cvpde}.
\begin{lem}[{\cite{zz-cvpde}}]\label{lem3.4}
{\rm (i)}\quad If  $u \in  H^{1}(B_{R,T})\cap L^\infty(B_{R,T})$,  then we have, for every   $\varepsilon\in(0,T) $, that
 $$u_h\in H^{1}(B_{R,T-\varepsilon})\cap L^\infty(B_{R,T-\varepsilon})\quad \ {\rm and}\ \quad
\partial_t u_h\in  H^{1}(B_{R,T-\varepsilon})\cap L^\infty(B_{R,T-\varepsilon})$$
 for  every
$h\in(0,\varepsilon),$ and that $\|u_h\|_{H^{1}(B_{R,T-\varepsilon})}$ is bounded uniformly with respect to $h\in(0,\varepsilon)$.\\
{\rm (ii)}\quad Let $u \in  H^{1}(B_{R,T})\cap L^\infty(B_{R,T})$ be a locally weak solution for the heat equation, and fix any two constants $\varepsilon,h$ such that $\varepsilon\in(0,T) $ and
$h\in(0,\varepsilon)$. Then $u_h$ be a locally weak solution for the heat equation on $B_{R,T-\varepsilon}.$
\end{lem}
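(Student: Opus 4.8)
The plan is to deduce everything from two elementary facts: that the averaging operator $u\mapsto u_h$ is an $L^\infty$- and $L^2$-contraction in the time variable, and that it commutes with both the spatial gradient $\nabla$ and the time derivative $\partial_t$. In what follows all $H^1$-spaces are taken over the product metric measure space, so membership in $H^1$ encodes control of the spatial gradient \emph{and} of $\partial_t$.

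\textbf{Part (i).} First I would note $\|u_h\|_{L^\infty(B_{R,T-\varepsilon})}\ls\|u\|_{L^\infty(B_{R,T})}$ by Jensen's inequality. Next I would establish, for $(x,t)\in B_{R,T-\varepsilon}$ and $h\in(0,\varepsilon)$, the commutation identities
$$\nabla u_h(x,t)=\frac1h\int_0^h\nabla u(x,t+\tau)\,d\tau,\qquad \partial_t u_h(x,t)=\frac1h\big(u(x,t+h)-u(x,t)\big)=\frac1h\int_0^h\partial_t u(x,t+\tau)\,d\tau,$$
as identities in $L^2(B_{R,T-\varepsilon})$. The first follows by approximating $u$ in $H^1(B_{R,T})$ by Lipschitz functions, for which it is classical, and using the $L^2$-continuity of $u\mapsto\nabla u$; the second from writing $u_h(x,t)=\frac1h\int_t^{t+h}u(x,s)\,ds$ and using that $s\mapsto u(x,s)$ is absolutely continuous for $\mu$-a.e.\ $x$ (a consequence of $\partial_t u\in L^2(B_{R,T})$ and Fubini). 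From the integral forms and Minkowski's inequality one gets $\|\nabla u_h\|_{L^2}\ls\|\nabla u\|_{L^2}$ and $\|\partial_t u_h\|_{L^2}\ls\|\partial_t u\|_{L^2}$ on the respective domains, hence $u_h\in H^1(B_{R,T-\varepsilon})$ with $\|u_h\|_{H^1(B_{R,T-\varepsilon})}\ls\|u\|_{H^1(B_{R,T})}$ uniformly in $h$. To see $\partial_t u_h\in H^1\cap L^\infty$ I would use the difference-quotient form: the time-translate $(x,t)\mapsto u(x,t+h)$ is the pullback of $u$ under the isometry $(x,t)\mapsto(x,t+h)$ of the product space, restricted to the sub-box $B_R\times(h,T-\varepsilon+h]\subset B_{R,T}$ (this is where $h<\varepsilon$ is used), hence lies in $H^1(B_{R,T-\varepsilon})\cap L^\infty(B_{R,T-\varepsilon})$; therefore so does $\partial_t u_h=\frac1h\big(u(\cdot,\cdot+h)-u\big)$.

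\textbf{Part (ii).} I would first convert the (time-integrated) weak formulation for $u$ into a pointwise-in-time identity. Testing against product functions $\phi(x,s)=\zeta(s)w(x)$ with $\zeta\in Lip_0((t_1,t_2))$ and $w\in Lip_0(B_r)$, and using arbitrariness of $\zeta$ together with the fundamental lemma of the calculus of variations, one obtains for each $w$ a full-measure set of times $s$ on which $\int_{B_r}\big(\partial_t u(x,s)\,w(x)+\ip{\nabla u(x,s)}{\nabla w(x)}\big)d\mu=0$; a diagonal argument over a countable $W^{1,2}_0(B_r)$-dense subset of $Lip_0(B_r)$, plus continuity of the left side in $w$ (valid for a.e.\ $s$ since $\partial_t u(\cdot,s),\nabla u(\cdot,s)\in L^2(B_r)$ a.e.), produces a single full-measure set $S\subset(0,T)$ on which this holds for all $w\in Lip_0(B_r)$. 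Averaging over $s\in(t,t+h)$, dividing by $h$, and invoking the commutation identities of part (i), I get $\int_{B_r}\big(\partial_t u_h(x,t)\,w(x)+\ip{\nabla u_h(x,t)}{\nabla w(x)}\big)d\mu=0$ for a.e.\ $t\in(0,T-\varepsilon)$ and all $w\in Lip_0(B_r)$. Given any $\phi\in Lip_0(B_r\times(t_1,t_2))$, applying this with $w=\phi(\cdot,t)$ for each such $t$ and integrating in $t$ yields $\int_{t_1}^{t_2}\int_{B_r}\big(\partial_t u_h\,\phi+\ip{\nabla u_h}{\nabla\phi}\big)d\mu\,dt=0$, i.e.\ $u_h$ is a locally weak solution on $B_{R,T-\varepsilon}$.

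\textbf{Main obstacle.} The only genuinely delicate point is the passage in part (ii) from the time-integrated weak formulation to a pointwise-in-time identity holding on a common full-measure set of times simultaneously for all spatial test functions; this needs separability of $W^{1,2}_0(B_r)$ (available on $RCD^*$ spaces) and the continuity in $w$ of the functional $w\mapsto\int_{B_r}(\partial_t u\,w+\ip{\nabla u}{\nabla w})d\mu$ for a.e.\ fixed time. One should also bear in mind that, since $\nabla$ here is the minimal relaxed gradient and $\ip{\cdot}{\cdot}$ the induced bilinear form rather than a pointwise differential, the commutation with averaging in part (i) must be justified through $L^2$-linearity and continuity of $u\mapsto\nabla u$ on $H^1$ (and Lipschitz approximation), not a pointwise chain rule. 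Everything else is routine manipulation of Steklov averages.
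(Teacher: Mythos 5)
Your argument is essentially correct, but note that the paper does not prove this lemma: it is quoted verbatim from \cite{zz-cvpde} (Lemmas 5.3--5.4 there), so there is no in-paper proof to compare against.

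On the merits of your proposal: Part (i) is sound, and the justification of the commutation identities via the vector structure of the minimal relaxed gradient and Bochner integration in $W^{1,2}(B_R)$ is the right way to handle the non-smooth setting; the difference-quotient observation that $\partial_t u_h=\tfrac1h\bigl(u(\cdot,\cdot+h)-u\bigr)$, with $h<\varepsilon$ guaranteeing that the time-translate stays inside $B_{R,T}$, correctly yields $\partial_t u_h\in H^1\cap L^\infty$. For Part (ii), however, your route through pointwise-in-time weak formulations (fundamental lemma of the calculus of variations, separability of $W^{1,2}_0(B_r)$, diagonalization) is considerably heavier than needed. A shorter argument: given $\phi\in Lip_0\bigl(B_r\times(t_1,t_2)\bigr)$ with $(t_1,t_2)\subset(0,T-\varepsilon)$, for each $\tau\in[0,h]$ the shifted function $\phi(\cdot,\cdot-\tau)$ is an admissible test function for the weak equation satisfied by $u$ on $B_R\times(t_1+\tau,t_2+\tau)\subset B_{R,T}$; substituting it into the weak formulation, changing variables $s=t-\tau$, and then averaging in $\tau$ over $[0,h]$ directly produces
\begin{equation*}
\int_{t_1}^{t_2}\!\!\int_{B_r}\Bigl(\partial_t u_h\cdot\phi+\ip{\nabla u_h}{\nabla\phi}\Bigr)d\mu\,ds=0
\end{equation*}
after invoking your commutation identities $(\partial_t u)_h=\partial_t u_h$ and $(\nabla u)_h=\nabla u_h$ from Part (i) and Fubini. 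This avoids extracting a full-measure set of times and the density/continuity bookkeeping altogether. Both approaches are correct, but the translation-of-test-functions argument is the standard and more economical one; your version buys nothing extra here.
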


Now we can prove Theorem \ref{thm1.1}, via the combination of Lemma \ref{lem3.2} and the approximation by the Steklov's average.
\begin{proof}[Proof of Theorem \ref{thm1.1}]
From \cite[Theorem 2.2]{stu96}, we have $u\in L^\infty_{\rm loc}(B_{R,T})$.
 Since the desired estimate is a local
estimate, without lost of generality, we may assume that $u\in L^\infty(B_{R,T})$.

Fix any $M'>M$, $M'\in\mathbb Q$, and let $v:=u/M'$. Then $v$ is a weak solution of the heat equation on $B_{R,T}$ with $m/M'\ls v\ls M/M'<1$  on $B_{R,T}$.

Given any $\varepsilon>0$ and any $h\in(0,\varepsilon)$, by Lemma \ref{lem3.4}(ii), we know the Steklov averages $v_h$ is a weak solution of the heat equation on $B_{R,T-\varepsilon}$. Now, we can use the Lemma \ref{lem3.2} to $v_h$ and let $h\to0^+$, and conclude that the estimate (\ref{eq3.10}) holds for $v$ on $B_{R/2}\times((T-\varepsilon)/2,T-\varepsilon)$. By the arbitrariness of $\varepsilon$, we get
\begin{equation*}
 |\nabla \ln u|^2(x,t)\ls C_N \cdot \Big( \frac{\ln(M'/m)}{R^2}+\frac{1}{T}+K\Big)\cdot\ln \frac{M'}{ u (x,t)},
\end{equation*}
for almost all $(x,t)\in  B_{R/2}\times{(T/2,T)}$.  The desired result comes from letting $M'\to M$.
\end{proof}
\begin{proof}[Proof of  Theorem \ref{thm1.2}]
Let  $(X,d,\mu)$ be a metric measure space with $RCD^*(K,N)$ and let $H(x,y,t) $ be the heat kernel on $(X,d,\mu)$, whose existence has been proved in \cite{stu95}. From \cite{jlz16} (by taking $\epsilon=1$ in Theorem 1.1 and 1.2 there), there exist positive constants $C_1,C_2$, depending only on $N,K$ and satisfying that if $K=0$ then $C_2=0$, such that
\begin{equation}\label{eq3.19}
\begin{split}
\frac{C_1^{-1}}{\mu\big(B(y,\sqrt t)\big)}&\exp\Big(-\frac{d^2(x,y)}{3t}-C_2\cdot t\Big)\ls H(x,y,t)\\
&\ls
\frac{C_1}{\mu\big(B(y,\sqrt t)\big)}\exp\Big(-\frac{d^2(x,y)}{5t}+C_2\cdot t\Big).
\end{split}
\end{equation}

Fix any $y_0\in X$ and $T,R>0$.  Denote by $B_R:=B(y_0,R)$. Let us consider the function $u(x,t):=H(x,y_0,t)$ on $ B_{R}\times(T/2,T).$
For any two points $(x,t),(x',t')\in B_{R}\times(T/2,T)$, we have, by Bishop-Gromov volume comparison (see  \cite[Proposition 3.6]{eks15}, also \cite[Lemma 2.2]{stu92}) and \eqref{eq3.19}, that
\begin{equation*}\begin{split}
\frac{u(x',t')}{u(x,t)}&\ls C_1^2\cdot \frac{\mu (B_{\sqrt t})}{ \mu (B_{\sqrt{ t'}})}\exp\Big(\frac{d^2(x,y_0)}{3t}+2C_2t\Big)\ls C_1^2\cdot \frac{\mu (B_{\sqrt T})}{ \mu (B_{\sqrt{T/2}})}\exp\Big(\frac{d^2(x,y_0)}{3t}+2C_2t\Big)\\
&\ls C_1^2\cdot C_3 \exp\big((N-1)\sqrt K\cdot\sqrt T\big)\cdot\exp\Big(\frac{d^2(x,y_0)}{3t}+2C_2t\Big),
\end{split}
\end{equation*}
where the constant $C_3$ depends only on $N$.
Then we have, by setting $M=\sup_{B_{R}\times(T/2,T)}u$ and $m=\inf_{B_{R}\times(T/2,T)}u$, that
$$\ln(M/m)\ls  \ln ( C_1^2\cdot C_3) +(N-1)\sqrt {K  T}+  \frac{R^2}{T}+2C_2T\ls C_5+C_4\cdot T+\frac{R^2}{T}$$
and
$$  \ln\frac{M}{u(x,t)}\ls C_5+C_4\cdot T+  \frac{d^2(x,y_0)}{T},$$
where $C_4:= K+2C_2 $ and $C_5:= \ln  (C^2_1\cdot C_3)+(N-1)^2/4$,  and  we have used $d(x,y_0)\ls R$ and $t\in( T/2,T)$.
By using gradient estimate (\ref{eq1.4}) in Theorem \ref{thm1.1}, we have
  \begin{equation*}
  |\nabla \ln H(x,y_0,t)|^2  (x,t)\ls  C_N\Big(\frac{C_5+C_4T}{R^2}+\frac{2}{T}+K \Big)\cdot \Big(C_5+C_4T+ \frac{d^2(x,y_0)}{T}\Big),
\end{equation*}
for almost every $(x,t)\in  B_{R/2}\times(T/2,T)$, where  the constant $C_N$ is given in Theorem \ref{thm1.1}. Therefore, we obtain, by letting $R\to\infty$, that
  \begin{equation*}
  |\nabla \ln H(x,y_0,t)|^2  (x,t)\ls C_N\Big( \frac{2}{T}+K \Big)\cdot \Big(C_5+C_4T+ \frac{d^2(x,y_0)}{T}\Big),
\end{equation*}
for almost every $(x,t)\in  X\times(T/2,T)$. This is the first assertion.
For the case where $K=0$, notice that $C_4=K+2C_2=0$.  This finishes the proof.
\end{proof}
\ \
\begin{proof}[Proof of Corollary \ref{cor1.5}]
Fixed any $t_0\in(0,T)$. We can assume $u$ is bounded and (H\"older) continuous on $B_{3R/4}\times(t_0/4,t_0)$.   Let $v:=u+c$ for some constant $c>0$  large enough.  From Theorem \ref{thm1.1}, we have, for almost all $t\in(t_0/2,t_0)$, that $v(\cdot,t)$ is Lipschitz continuous on $B_{R/2}$ with Lipschitz constant   independent of $t$.  Noticing that $u(\cdot,\cdot)$ is continuous,  by letting $t\to t_0$, we conclude that  $v(\cdot,t_0)$ is Lipschitz continuous on $B_{R/2}$. This    completes the proof.
\end{proof}
 \ \
\begin{proof}[Proof of Corollary \ref{cor1.6}]
Consider $u$ on $B_{R,T}$ for each $R>0$.  Theorem \ref{thm1.1} implies that
$$\sup_{B_R\times (T/2,T)} \frac{ |\nabla f|^2 }{\ln (M/u)}  \ls C_N\cdot\Big(\frac{\ln(M/m)}{R^2}+\frac{1}{T}+K\Big).$$
The desired result comes from letting $R\to\infty$.
 \end{proof}
\ \
\begin{proof}[Proof of Corollary \ref{cor1.8}]
If $N=1$, the metric measure space $(X,d,u)$ satisfies $RCD^*(0,1)$ implies that it satisfies $RCD^*(0,N)$ for any $N>1$. Hence, we can assume that $N>1$.

 Fix $x_0\in X$ and let $B_R:=B(x_0,R)$. For each $R>0$, let $A_R:=\sup_{B_{R}\times(-R^2,0)}|u|$ and
  $$v_R(x,t):=u(x,t-R^2)+2A_R.$$
  We have $A_R\ls v_R\ls 3 A_R$ on $B_{R}\times(0, R^2)$.
 By Theorem \ref{thm1.1}, we get
 $$|\nabla \ln v_R|^2\ls C_N\Big(\frac{\ln 3}{R^2}+\frac{2}{R^2}\Big)\cdot\ln 3,\quad \mu{\rm-a.e.}\ \ {\rm on}\ \ B_{R/2}\times(R^2/2,  R^2).$$
 That is,
  $$|\nabla  u|^2\ls\frac{ 6C_N}{R^2}\cdot \big(u+2A_R\big)^2\ls 6C_N\cdot \frac{ (3A_R)^2}{R^2} ,\quad \mu{\rm-a.e.}\ \ {\rm on}\ \ B_{R/2}\times(-R^2/2, 0).$$
By using the assumption (\ref{eq1.6}) and letting $R\to\infty$, we have $|\nabla  u|=0$ for almost all $(x,t)\in X\times(-\infty,0)$. This finishes the proof.
\end{proof}


\begin{thebibliography}{99}
\bibitem{ags15} L. Ambrosio, N. Gigli, G. Savar\'e, \emph{Bakry--Emery curvature-dimension condition and Riemannian Ricci curvature bounds}, Ann. Probab., 43 (2015), 339--404.
\bibitem{ags13-lip} L. Ambrosio, N. Gigli, G. Savar\'e, \emph{Density of Lipschitz functions and equivalence of weak gradients in metric measure
spaces}, Rev. Mat. Iberoam., 29 (2013), 969--996.

\bibitem{ags-duke} L. Ambrosio, N. Gigli, G. Savar\'e, \emph{Metric meausure spaces with Riemannian Ricci curvauture bounded from below}, Duke Math. J., 163(7) (2014), 1405--1490.

\bibitem{ags14} L. Ambrosio, N. Gigli, G. Savar\'e, \emph{Calculus and heat flow in metric measure spaces and applications to spaces with Ricci bounds from below}, Invent. Math., 195(2) (2014), 289--391.

\bibitem{ams16} L. Ambrosio, A. Mondino, G. Savar\'e, \emph{On the Bakry-\'Emery condition, the gradient estimates and the local-to-global property of $RCD^*(K,N)$ metric measure spaces},  J. Geom. Anal.,  26(1) (2016), 24--56.


\bibitem{bs10} K. Bacher \& K. Sturm, \emph{Localization and tensonrization properties of the curvature-dimension for metric measure spaces,}  J. Funct. Anal., 259(1) (2010) 28--56.

\bibitem{bbg16}D. Bakry, F. Bolley \& I. Gentil, \emph{The Li-Yau inequality and applications under a curvature-dimension condition}, to appear in Ann. Inst. Fourier, Grenoble, available at http://arxiv.org/abs/1412.5165.

\bibitem{bq00} D. Bakry, Z. Qian,  \emph{Some new results on eigenvectors via dimension, diameter, and Ricci curvature}, Adv. Math.  155(1)  (2000), 98--153.






\bibitem{bhllmy15} F. Bauer, P. Horn, Y. Lin, G. Lippner, D. Mangoubi \& S-T. Yau, \emph{Li-Yau inequality on graphs},  J. Differ. Geom., 99 (2015), 359--405.






\bibitem{che99} J. Cheeger, \emph{Differentiability of Lipschitz functions on metric measure spaces}. Geom. Funct. Anal. 9,  (1999), 428--517.



\bibitem{eng06} A. Engoulatov, \emph{A universal bound on the gradient of logarithm of the heat kernel for manifolds with bounded Ricci curvature}, J. Funct. Anal. 238 (2006) 518--529.


\bibitem{eks15} M. Erbar, K. Kuwada, K. Sturm, \emph{On the equivalence of the entropic curvature-dimension condition and Bochner¡¯s inequality on metric measure spaces}, Invent. Math., 201 (2015), 993--1071.

\bibitem{gm14} N. Garofalo, A .Mondino, \emph{Li-Yau and Harnack type inequalities in metric measure spaces}, Nonlinear Anal., 95(2014), 721--734.

\bibitem{gig13} N. Gigli, \emph{The splitting theorem in non-smooth context}, available at https://arxiv.org/abs/1302.5555.

\bibitem{gig15} N. Gigli, \emph{On the differential structure of metric measure spaces and applications}, Mem. Amer. Math. Soc. 236 (1113)
(2015).






\bibitem{haj03} P. Haj{\l}asz, \emph{Sobolev spaces on metric-measure spaces}, Heat kernels and analysis on manifolds, graphs, and metric spaces (Paris, 2002),  173--218, Contemp. Math., 338, Amer. Math. Soc., Providence, RI, 2003.

\bibitem{hk00} P. Haj{\l}asz, P. Koskela, \emph{Sobolev met Poincar\'e}, Mem. Am. Math. Soc. 145(688), (2000), x--101. 



\bibitem{hkx13} B. Hua, M. Kell \& C. Xia, \emph{Harmonic functions on metric measure spaces}, available at http://arxiv.org/abs/1308.3607.

\bibitem{ham93}   R. Hamilton, \emph{A matrix Harnack estimate for the heat equation.} Comm. Anal. Geom., 1(1):88--99, 1993.

\bibitem{hus99} E. P. Hsu, \emph{Estimates of derivatives of the heat kernel on a compact Riemannian manifold}, Proc. Amer. Math. Soc., 127, 2739--3744, 1999.


\bibitem{jlz16} R. Jiang, H. Li, H. Zhang, \emph{Heat kernel bounds on metric measure spaces and some spplications,} Potential Anal.,  44(2016), 601--627.

\bibitem{jia15} R. Jiang, \emph{The Li-Yau inequality and heat kernels on metric measure spaces,} J. Math. Pures Appl. 104 (9) (2015) 29--57.


\bibitem{jiang-z16} R. Jiang \& H. C. Zhang, \emph{Hamilton¡¯s gradient estimates and a monotonicity formula for heat flows on metric measure spaces,} Nonlinear Anal., 131(2016), 32--47.


\bibitem{lee12}  P. W. Y. Lee, \emph{Generalized Li-Yau estimates and Huisken's monotonicity formula}, to appear in ESAIM:COCV, available at http://arxiv.org/abs/1211.5559.

\bibitem{lx11}  J. Li \& X. Xu, \emph{Differential Harnack inequalities on Riemannian manifolds I: linear heat equation}. Adv. Math., 226(5): 4456--4491, 2011.

\bibitem{ly86}   P. Li \& S.-T. Yau,  \emph{On the parabolic kernel of the Schr\"odinger operator}. Acta Math., 156(3-4):153--201, 1986.

\bibitem{lixd05} X.-D. Li, \emph{Liouville theorems for symmetric diffusion operators on complete Riemannian manifolds}, J. Math. Pures Appl. 54, 1295--1361,  2005.

\bibitem{lixd12} X.-D. Li, \emph{Perelman¡¯s entropy formula for the Witten Laplacian on Riemannian manifolds via Bakry--Emery Ricci curvature}, Math. Ann., 353:403--437, 2012.

\bibitem{lixd16} X.-D. Li, \emph{Hamilton¡¯s Harnack inequality and the W-entropy formula on complete Riemannian manifolds}, Stochastic Process. Appl., 126 (2016), 1264--1283.

















\bibitem{lv09} J. Lott, C. Villani,  \emph{Ricci curvature for metric-measure spaces via optimal transport}, Ann. of Math. 169 (2009), 903--991 

\bibitem{lv07-jfa} J. Lott, C. Villani,  \emph{Weak curvature bounds and functional inequalities}, J. Funct. Anal. 245(1) (2007), 311--333.



\bibitem{mm13} N. Marola, \& M. Masson, \emph{On the Harnack inequality for parabolic minimizers in metric measure spaces}, Tohoku Math. J., 65 (2013), 569--589.

\bibitem{mn14} A. Mondino, A. Naber, \emph{Structure theory of metric measure spaces with lower Ricci curvature bounds I}, available at http://arxiv.org/abs/1405.2222.


\bibitem{nil08} L. Ni, \emph{Monotonicity and Li-Yau-Hamilton inequalities},  Surveys in differential geometry Vol. XII. Geometric flows,  251--301, Surv. Differ. Geom., 12, Int. Press, Somerville, MA, 2008.








\bibitem{pet11}  A. Petrunin, \emph{Alexandrov meets Lott--Villani--Sturm},   M\"{u}nster J. of Math. 4 (2011), 53--64.




\bibitem{qia14}   B. Qian, \emph{Remarks on differential Harnack inequalities}. J. Math. Anal. Appl., 409(1): 556--566, 2014.

\bibitem{qzz13} Z. Qian, H.-C. Zhang \& X.-P. Zhu, \emph{Sharp spectral gap and Li-Yau's estimate on Alexandrov spaces},  Math. Z., 273(3-4) (2013) 1175--1195.



\bibitem{shan00} N. Shanmugalingam, \emph{Newtonian spaces:An extension of Sobolev spaces to metric measure spaces}. Rev. Mat. Iberoam. 16,(2000), 243--279. 



\bibitem{sz06}  P. Souplet \& Q. S. Zhang, \emph{Sharp gradient estimate and Yau's Liouville theorem for
the heat equation on noncompact manifolds.} Bull. London Math. Soc., 38(6) (2006) 1045--1053.

\bibitem{str98}  D. W. Stroock, J. Turetsky, \emph{Upper bounds on derivatives of the logarithm of the heat kernel}, Comm. Anal. Geom. 6 (1998) 669--685.

\bibitem{stu92} K. Sturm, \emph{Heat kernel bounds on manifolds}, Math Ann., 292 (1992), 149--162.

\bibitem{stu06-1} K. Sturm,  \emph{On the geometry of metric measure spaces. I}. Acta Math.  196(1) (2006), 65--131, 

\bibitem{stu06-2} K. Sturm,  \emph{On the geometry of metric measure spaces. II}. Acta Math.  196(1) (2006),  133--177.

\bibitem{stu95}K. Sturm, \emph{Analysis on local Dirichlet spaces. II. Upper Gaussian estimates for the fundamental solutions of parabolic equations}, Osaka J. Math., 32(2)  (1995),   275--312.

\bibitem{stu96} K. Sturm, \emph{Analysis on local Dirichlet spaces. III. The parabolic Harnack inequality},
J. Math. Pures Appl., 75 (1996), 273--297. 




\bibitem{zz10} H. C. Zhang, X. P.  Zhu, \emph{Ricci curvature on Alexandrov spaces and rigidity theorems}, Comm. Anal. Geom. 18(3) (2010), 503--554.




\bibitem{zz-cvpde}H. C. Zhang, X. P. Zhu, \emph{Local Li--Yau's estimates on $RCD^*(K,N)$ metric measure spaces}, Calc. Var. PDE (2016) 55: 93. doi:10.1007/s00526-016-1040-5
\end{thebibliography}
\end{document}